\newtheorem{thm}{Theorem}[section]
\newtheorem{lema}[thm]{Lemma}
\newtheorem{cor}[thm]{Corollary}
\theoremstyle{definition}
\newtheorem*{defi}{Definition}
\newtheorem*{rmk}{Remark}
\newcommand{\D}{\mathbb{D}}
\newcommand{\R}{\mathbb{R}}
\newcommand{\Z}{\mathbb{Z}}
\newcommand{\N}{\mathbb{N}}
\newcommand{\C}{\mathbb{C}}
\newcommand{\ME}{\mathcal{E}}
\newcommand{\MQ}{\mathcal{Q}}
\newcommand{\diam}{\textup{diam}}
\newcommand{\ii}{\textup{i}}
\newcommand{\re}{\textup{Re\,}}
\newcommand{\im}{\textup{Im\,}}
\newcommand{\sing}{\textup{sing}}
\newcommand{\dens}{\textup{density}}
\newcommand{\Area}{\textup{Area}}
\begin{document}

\author{SONG ZHANG}
\address{Department of Mathematics, Nanjing University, Nanjing, 210093, P. R. China}
\email{dg1521017@smail.nju.edu.cn}

\author{FEI YANG}
\address{Department of Mathematics, Nanjing University, Nanjing, 210093, P. R. China}
\email{yangfei@nju.edu.cn}

\title[Area of the complement of the fast escaping sets]{AREA OF THE COMPLEMENT OF THE FAST ESCAPING SETS OF A FAMILY OF ENTIRE FUNCTIONS}

\begin{abstract}
Let $f$ be an entire function with the form $f(z)=P(e^z)/e^z$, where $P$ is a polynomial with $\deg(P)\geq2$ and $P(0)\neq 0$. We prove that the area of the complement of the fast escaping set (hence the Fatou set) of $f$ in a horizontal strip of width $2\pi$ is finite. In particular, the corresponding result can be applied to the sine family $\alpha\sin(z+\beta)$, where $\alpha\neq 0$ and $\beta\in\C$.
\end{abstract}

\subjclass[2010]{Primary: 37F45; Secondary: 37F10, 37F25}

\keywords{Fatou set; Julia set; Lebesgue area}

\date{\today}



\maketitle


\section{Introduction}

Let $f:\C\to\C$ be a transcendental entire function. Denote by $f^{\circ n}$ the $n$-th iterate of $f$. The \textit{Fatou set} $F(f)$ of $f$ is defined as the maximal open set in which the family of iterates $\{f^{\circ n}:n\in\N\}$ is normal in the sense of Montel. The complement of $F(f)$ is called the \textit{Julia set} of $f$, which is denoted by $J(f)$. It is well known that $J(f)$ is a perfect completely invariant set which is either nowhere dense or coincides with $\mathbb{C}$. For more details about these sets, one can refer \cite{Bea91}, \cite{CG93} and \cite{Mil06} for rational maps, and \cite{Ber93} and \cite{EL92} for meromorphic functions.

Already in 1920s, Fatou considered the iteration of transcendental entire functions \cite{Fat26} and one of his study object was $f(z)=\alpha\sin(z)+\beta$, where $0<\alpha<1$ and $\beta\in\R$. After Misiurewicz showed that the Fatou set of $f(z)=e^z$ is empty in 1981 \cite{Mis81}, the dynamics of exponential maps and trigonometric functions attracted many interests from then on. See \cite{DK84}, \cite{DT86} and \cite{DG87} for example. In particular, in 1987 McMullen \cite{McM87} proved a remarkable result which states that the Julia set of $\sin(\alpha z+\beta)$, $\alpha\neq 0$ always has positive Lebesgue area and the Hausdorff dimension of the Julia set of $\lambda e^z$, $\lambda\neq 0$ is always $2$. From then on a series of papers considered the area and the Hausdorff dimension of the dynamical objects of the transcendental entire functions, not only for the Julia sets in dynamical planes (see \cite{Sta91}, \cite{Kar99a}, \cite{Kar99b}, \cite{Tan03}, \cite{Sch07}, \cite{Bar08}, \cite{RS10}, \cite{AB12}, \cite{Rem14}, \cite{Six15a} and the references therein for example), but also the bifurcation loci in the parameter spaces (see \cite{Qiu94} and \cite{ZL12}).

Unlike the polynomials, the Julia set of a transcendental entire function $f$ is always unbounded. Since the Fatou set of $f$ is dense in the complex plane (if $F(f)\neq\emptyset$), it is interesting to ask when the Fatou set of $f$ has finite area. For the sine function $f(z)=\sin z$, Milnor conjectured that the area of the Fatou set of $f$ is finite in a vertical strip of width $2\pi$. By applying the tools in \cite{McM87}, Schubert proved this conjecture in 2008 \cite{Sch08}.

For a transcendental entire function $f$, the \textit{escaping set} $I(f)$ was studied firstly by Eremenko in \cite{Ere89}. A subset of the escaping set, called the \textit{fast escaping set} $A(f)$, was introduced by Bergweiler and Hinkkanen in \cite{BH99}. These sets have received quite a lot of attention recently. Especially for the fast escaping set, see \cite{Six11}, \cite{RS12}, \cite{Six13}, \cite{Six15b}, \cite{Evd16} and the references therein. In this paper, we consider the area of the complement of the fast escaping sets of a family of entire functions and try to extend the result of Schubert to this class. Our main result is the following.

\begin{thm}\label{thm}
Let $P$ be a polynomial with $\deg(P)\geq 2$ and $P(0)\neq 0$. Then the area of the complement of the fast escaping set of any function with the form $f(z)=P(e^{z})/e^{z}$ is finite in any horizontal strip of width $2\pi$.
\end{thm}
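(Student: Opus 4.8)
The plan is to adapt McMullen's and Schubert's techniques to the function $f(z)=P(e^z)/e^z$, which is $2\pi i$-periodic, so it descends to a map on the cylinder $\C/2\pi i\Z$; fixing a strip $S$ of width $2\pi$ is then natural. The first step is to understand the geometry of $f$ near infinity. Writing $z=x+\ii y$, when $\re z=x\to+\infty$ we have $f(z)=P(e^z)/e^z\sim a_d e^{(d-1)z}$ where $d=\deg(P)\geq 2$ and $a_d$ is the leading coefficient, so $f$ behaves like an exponential of derivative $d-1\geq 1$; when $x\to-\infty$ we have $f(z)=P(e^z)/e^z\sim P(0)e^{-z}$ with $P(0)\neq 0$, so again $f$ is (up to a bounded correction) a genuine exponential map $e^{-z}$ times a constant. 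In both asymptotic regimes, preimages of large right half-planes are thin vertical strips, and preimages of large left half-planes are again thin vertical strips shifted around the cylinder: this is the key structural fact that makes the fast escaping set large.

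The second step is to set up the covering/measure-estimate machinery. Let $H_R=\{\re z>R\}$. For $R$ large, $f^{-1}(H_R)\cap S$ consists of finitely many narrow ``vertical'' tracts, and on each such tract $f$ is expanding by a factor comparable to $e^{cx}$ for the relevant sign of $x$. I would show that there is $R_0$ so that for $R\geq R_0$ the set $f^{-1}(H_R)$ is contained in finitely many strips whose total width (measured on the cylinder) is $O(R e^{-cR})$ or similar, and more importantly that iterating this gives a nested sequence: points that stay in $\bigcup_n f^{-n}(H_R)$ escape, and in fact escape fast. Concretely, using the standard characterization of $A(f)$ via a level set $A_R(f)=\{z: |f^{\circ n}(z)|\geq M_n(R) \text{ for all } n\}$ (with $M(R)=\max_{|z|=R}|f(z)|$ and $M_n$ its iterates), I would verify that once $\re f^{\circ k}(z)$ is large enough, the orbit is trapped in a region where moduli grow like iterated exponentials, hence the orbit is fast escaping. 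Thus $S\setminus A(f)$ is essentially the set of points whose forward orbit (projected to the cylinder) never enters the ``good'' far-right or far-left regions $\Omega=\{|\re z|>R_0\}$.

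The third step is the quantitative area estimate, which is the heart of the matter and where McMullen's argument enters. Partition $S$ (or rather $S\cap\{|\re z|\leq R_0\}$ together with a controlled remainder) into pieces, and show a uniform ``expansion + distortion'' statement: there exist constants $\theta\in(0,1)$ and $C>0$ such that for any square-like piece $Q$ in the compact part, a definite proportion $\geq\theta$ of the area of $Q\cap f^{-1}(\Omega)$ maps over $\Omega$ with bounded distortion, and the part of $Q$ not yet sent into $\Omega$ is again a union of such pieces to which the estimate reapplies. Koebe-type distortion control for the branches of $f^{-1}$ (valid because the relevant branches are defined on large round regions, $f$ having no critical values escaping to these tracts beyond a bounded set, and finitely many asymptotic/critical values all lying in a compact set) gives the bounded-distortion input. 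Iterating the inequality $\Area(Q_{n+1})\leq(1-\theta)\Area(Q_n)$ yields $\Area(S\setminus A(f))=0$ — in fact one gets exponential decay, so the complement has zero area, which is even stronger than finiteness; the ``finite area'' in the statement presumably refers to a less optimal but more robust version, and I would state whichever the distortion estimates actually deliver. Finally, since $F(f)\subseteq\C\setminus A(f)$ (fast escaping points are in the Julia set, as $A(f)\subseteq J(f)$ is standard), the same bound applies to the Fatou set.

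The main obstacle I anticipate is the distortion/expansion estimate near the transition between the two asymptotic regimes $\re z\to\pm\infty$ and near the finitely many critical points of $f$ (the critical points of $z\mapsto P(e^z)/e^z$ come from zeros of $P'(w)w-P(w)$, finitely many mod $2\pi i$). One must ensure that the inverse branches of $f$ used in the induction avoid these critical values — this is where $\deg(P)\geq 2$ and $P(0)\neq 0$ are used: $P(0)\neq 0$ guarantees $0$ is not an asymptotic value reached in the left regime (so $e^{-z}$-type behavior is clean), and $\deg(P)\geq 2$ guarantees genuine exponential growth (derivative $d-1\geq 1$) on the right. Handling the bounded ``bad'' set of critical and asymptotic values, and showing it does not obstruct the measure-theoretic recursion (e.g. by excising bounded neighborhoods whose preimages shrink geometrically), will require the most care; everything else is a matter of carrying out McMullen/Schubert-style estimates in this specific family.
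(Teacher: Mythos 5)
There is a genuine error in your third step, and it is not a removable technicality: you claim that iterating a uniform estimate $\Area(Q_{n+1})\leq(1-\theta)\Area(Q_n)$ over \emph{all} square-like pieces of the compact part $S\cap\{|\re z|\leq R_0\}$ yields $\Area(S\setminus A(f))=0$. This conclusion is false for this family. Since $A(f)\subset J(f)$ (Corollary \ref{cor-I-J}), the complement of $A(f)$ contains the Fatou set, which is open and in general nonempty — e.g.\ for $f(z)=\sin z$ the parabolic basin of $0$ is a nonempty open set — so $S\setminus A(f)$ has strictly positive area. The step that fails is the uniform-proportion claim on the compact part: there is no expansion there, and a square contained in an invariant Fatou component has \emph{no} points whose orbits ever enter the far region $\Omega$, so no $\theta>0$ works. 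The expansion-plus-distortion recursion is only available for squares lying in $\Lambda(x)=\{|\re z|>x\}$ with $x$ large, and even there it does not drive the density of $A(f)^c$ to zero in finitely many generations; it gives a density deficit at generation $k$ of order $x_{k+1}e^{-x_k}$ with $x_{k+1}=2e^{x_k/2}$, whose \emph{sum} over $k$ is small but positive, of order $e^{-x/2}$.

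The correct architecture (which is what the paper carries out) is therefore: (i) for each square $Q_0\subset\Lambda(x)$ with $x\geq x^*$ and side $r$, build a McMullen-type nested family in which at stage $k$ one keeps only those sub-pieces whose $k$-th image is a full grid square inside $\Lambda(x_k)$; the resulting limit set lies in $A(f)$ and has density at least $1-O(e^{-x/2})$ in $Q_0$; (ii) sum the deficits $O(e^{-mr/2})\cdot r^2$ over all grid squares $Q_r^{m,n}$ in the strip with $|m|r\geq x^*$, which converges as a geometric series; (iii) bound the contribution of the compact part $S\cap\{|\re z|\leq x^*\}$ trivially by its own area. The conclusion is finiteness of $\Area(S\cap A(f)^c)$, not vanishing, and your own parenthetical doubt (``the finite area in the statement presumably refers to a less optimal but more robust version'') is the symptom of this structural mistake. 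Your first two steps (asymptotics of $f$ for $\re z\to\pm\infty$, absence of finite asymptotic values, and the reduction of fast escaping to orbits trapped in $\{|\re z|>x_k\}$ with iterated-exponential $x_k$) are sound and essentially match the paper, but as written the proof does not establish the theorem.
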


The method in this paper is strongly inspired by the work of McMullen and Schubert (\cite{McM87} and \cite{Sch08}). It is worth to mention that we give also a specific formula of the upper bound of $\Area(S\cap A(f)^c)$ in terms of the coefficients of the polynomial $P$ (see Theorem \ref{thm-main-restate}), where $S$ is any horizontal strip of width $2\pi$ and $A(f)^c$ is the complement of the fast escaping set of $f$. In fact, we believe that our method can be adopted also to the type of entire functions with the form
\begin{equation*}
f(z)=\frac{P(w)}{w^m}\circ\exp(z)
\end{equation*}
completely similarly, where $m\geq 1$ is a positive integer, $P$ is a polynomial with degree $\deg(P)\geq m+1$ and $P(0)\neq 0$.

As a consequence of Theorem \ref{thm} and Theorem \ref{thm-main-restate}, we have the following result on the area of the complement of the fast escaping set of the sine family.

\begin{thm}\label{thm-sin}
Let $S$ be any vertical strip of width $2\pi$. Then the area of the complement of the fast escaping set of $f(z)=\alpha \sin(z+\beta)$ with $\alpha\neq 0$ satisfies
\begin{equation*}
\Area(S\cap A(f)^c)\leq (4\pi+4r)\left(x^{*}+r+8c\,e^{4-x^*/2}\frac{r}{1-e^{-r/2}}\right),
\end{equation*}
where
\begin{equation*}
r=\frac{1}{8}, \quad c=\frac{536\sqrt{2}}{|\alpha|}+\frac{1}{|\alpha|^2}
\end{equation*}
and
\begin{equation*}
x^*=\max\Big\{\log\Big(1+\frac{18K}{|\alpha|}\Big),\log\Big(\frac{8(K+1)}{|\alpha|}\Big), 6\log 2,12+2\log c\Big\}
\end{equation*}
with $K=\max\{|\alpha|/2,|\beta|\}$. In particular, if $f(z)=\sin z$ or $\cos z$, then
\begin{equation*}
\Area(S\cap A(f)^c)<361.
\end{equation*}
\end{thm}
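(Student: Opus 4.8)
The plan is to reduce $f(z)=\alpha\sin(z+\beta)$ to the normal form treated in Theorem \ref{thm-main-restate} by an affine change of variable, and then read off the constants. Writing $\sin\theta=(e^{\ii\theta}-e^{-\ii\theta})/(2\ii)$ and conjugating by the rotation $\psi(z)=\ii z$, one computes
\begin{equation*}
g:=\psi\circ f\circ\psi^{-1},\qquad g(w)=\ii\,f(-\ii w)=\frac{\alpha}{2}\bigl(e^{\ii\beta}e^{w}-e^{-\ii\beta}e^{-w}\bigr)=\frac{P(e^{w})}{e^{w}},
\end{equation*}
where $P(u)=\tfrac{\alpha}{2}e^{\ii\beta}u^{2}-\tfrac{\alpha}{2}e^{-\ii\beta}$. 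Since $\alpha\neq0$ we have $\deg(P)=2$ and $P(0)=-\tfrac{\alpha}{2}e^{-\ii\beta}\neq0$, so $g$ lies in the class covered by Theorems \ref{thm} and \ref{thm-main-restate}. Moreover $\psi$ is a Euclidean isometry, hence area–preserving; it maps vertical strips of width $2\pi$ onto horizontal strips of width $2\pi$; and, because the fast escaping set transforms naturally under affine conjugacy (so that $A(g)=\psi(A(f))$), one gets $\Area(S\cap A(f)^{c})=\Area(\psi(S)\cap A(g)^{c})$ for every vertical strip $S$ of width $2\pi$. It is therefore enough to bound the area of the complement of $A(g)$ inside a horizontal strip of width $2\pi$.

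Next I would apply Theorem \ref{thm-main-restate} to $g$. For the quadratic $P$ above the leading coefficient has modulus $|\alpha|/2$, the linear coefficient is $0$, and the constant term again has modulus $|\alpha|/2$; substituting these data into the constants supplied by Theorem \ref{thm-main-restate} yields exactly the values $r=\tfrac18$, $c=\tfrac{536\sqrt2}{|\alpha|}+\tfrac{1}{|\alpha|^{2}}$ and $x^{*}$ as recorded in the statement, with $K=\max\{|\alpha|/2,|\beta|\}$, and the estimate of Theorem \ref{thm-main-restate} becomes precisely the claimed bound for $\Area(S\cap A(f)^{c})$. The real work here is bookkeeping: one must track how each of the finitely many threshold conditions used in the proof of Theorem \ref{thm-main-restate} — the radius of convergence of the series appearing there, the domains of validity of the auxiliary distortion and asymptotic estimates, and the condition making the geometric tail of the area sum summable — specializes, for this particular $P$, to one of the four expressions inside the maximum defining $x^{*}$, and that the universal numerical constant $536\sqrt2$ and the shape $1/|\alpha|+1/|\alpha|^{2}$ of $c$ come out correctly.

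For the two distinguished functions, $f(z)=\sin z$ has $\alpha=1$, $\beta=0$, hence $K=\tfrac12$, while $f(z)=\cos z=\sin\bigl(z+\tfrac{\pi}{2}\bigr)$ has $\alpha=1$, $\beta=\tfrac{\pi}{2}$, hence $K=\tfrac{\pi}{2}$. In both cases $c=536\sqrt2+1$, so the term $12+2\log c$ dominates the other three candidates in the maximum and $x^{*}=12+2\log c$ for both functions. Since $e^{4-x^{*}/2}$ is then of order $10^{-4}$, the correction term $8c\,e^{4-x^{*}/2}\,\tfrac{r}{1-e^{-r/2}}$ is only about $2$, so the right-hand side of the displayed inequality is close to $\bigl(4\pi+\tfrac12\bigr)\bigl(12+2\log c+\tfrac18\bigr)$; carrying out the arithmetic gives $\Area(S\cap A(f)^{c})<361$.

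The only step that is not essentially elementary is the middle one: Theorem \ref{thm-main-restate} is stated for an arbitrary polynomial $P=a_{d}u^{d}+\cdots+a_{0}$ with $\deg(P)\geq2$ and $P(0)\neq0$, and the point is to verify that, on setting $d=2$, $a_{1}=0$ and $|a_{2}|=|a_{0}|=|\alpha|/2$, its rather elaborate universal constants collapse to the clean expressions for $r$, $c$ and $x^{*}$ in the statement. The coordinate change in the first step and the numerical evaluation in the third are routine.
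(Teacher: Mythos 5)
Your reduction has a genuine gap coming from the choice of conjugacy. You conjugate by $\psi(z)=\ii z$ and obtain $g(w)=P(e^w)/e^w$ with $P(u)=\tfrac{\alpha}{2}e^{\ii\beta}u^{2}-\tfrac{\alpha}{2}e^{-\ii\beta}$, and then assert that $|a_2|=|a_0|=|\alpha|/2$. But in Theorem \ref{thm-sin} the parameter $\beta$ is allowed to be an arbitrary complex number, and for $\im\beta\neq0$ one has $|a_2|=\tfrac{|\alpha|}{2}e^{-\im\beta}$ and $|a_0|=\tfrac{|\alpha|}{2}e^{\im\beta}$. Consequently $K_0=\min\{|a_0|,|a_2|\}=\tfrac{|\alpha|}{2}e^{-|\im\beta|}$ can be arbitrarily small, so the constant $c=\tfrac{536\sqrt2}{|\alpha|}+\tfrac{1}{|\alpha|^{2}}$ of the statement no longer satisfies the admissibility condition \eqref{c2} for your polynomial, and likewise the stated $x^{*}$ (which grows only like $\log|\beta|$) falls below the thresholds $R_3,R_6$ for your polynomial, which grow like $|\im\beta|$. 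So Theorem \ref{thm-main-restate} applied to your $g$ does not yield the claimed bound when $\beta\notin\R$. The paper avoids this by conjugating with the isometry $z\mapsto\ii(z+\beta)$, which absorbs $\beta$ into a translation: the resulting function is $\tfrac{\alpha}{2}e^{z}+\ii\beta-\tfrac{\alpha}{2}e^{-z}=P(e^z)/e^z$ with $P(z)=\tfrac{\alpha}{2}z^{2}+\ii\beta z-\tfrac{\alpha}{2}$, so that $|a_0|=|a_2|=|\alpha|/2$ (hence $K_0=|\alpha|/2$, giving exactly the stated $c$) and $K=\max\{|\alpha|/2,|\beta|\}$ enters only through the harmless linear coefficient, giving exactly the stated $x^{*}$ for every complex $\beta$.

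Two smaller points. First, even for real $\beta$ your constants do not ``collapse exactly'' to those in the statement: with $a_1=0$ your polynomial produces $K=|\alpha|/2$, so $\beta$ disappears and the thresholds differ from the stated ones; the stated bound would still follow, but only after observing that Theorem \ref{thm-main-restate} permits any constants at least as large as the required thresholds, i.e.\ that the stated $x^{*}$ with $K=\max\{|\alpha|/2,|\beta|\}$ dominates the thresholds of your polynomial — a check you do not make. Second, in the numerical case the margin is very thin: with $c=536\sqrt2+1$ and $x^{*}=12+2\log c$ the correction term equals $\tfrac{1}{e^{2}-e^{31/16}}\approx2.23$ and the total is about $360.9$, so the estimate ``the correction is about $2$'' needs to be replaced by the exact evaluation to justify the bound $361$.
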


Since the fast escaping set of $f(z)=P(e^{z})/e^{z}$ is contained in the Julia set (see Corollary \ref{cor-I-J}), it means that the complement of the fast escaping set contains the Fatou set and hence Theorem \ref{thm-sin} is a generalization of Schubert's result. In \cite{Sch08} Schubert proved that $\Area(S\cap F(f))<574$ for $f(z)=\sin z$, where $S$ is a vertical strip with width $2\pi$. See Figure \ref{Fig-sin-cos}.

\begin{figure}[!htpb]
  \setlength{\unitlength}{1mm}
  \centering
  \includegraphics[height=45mm]{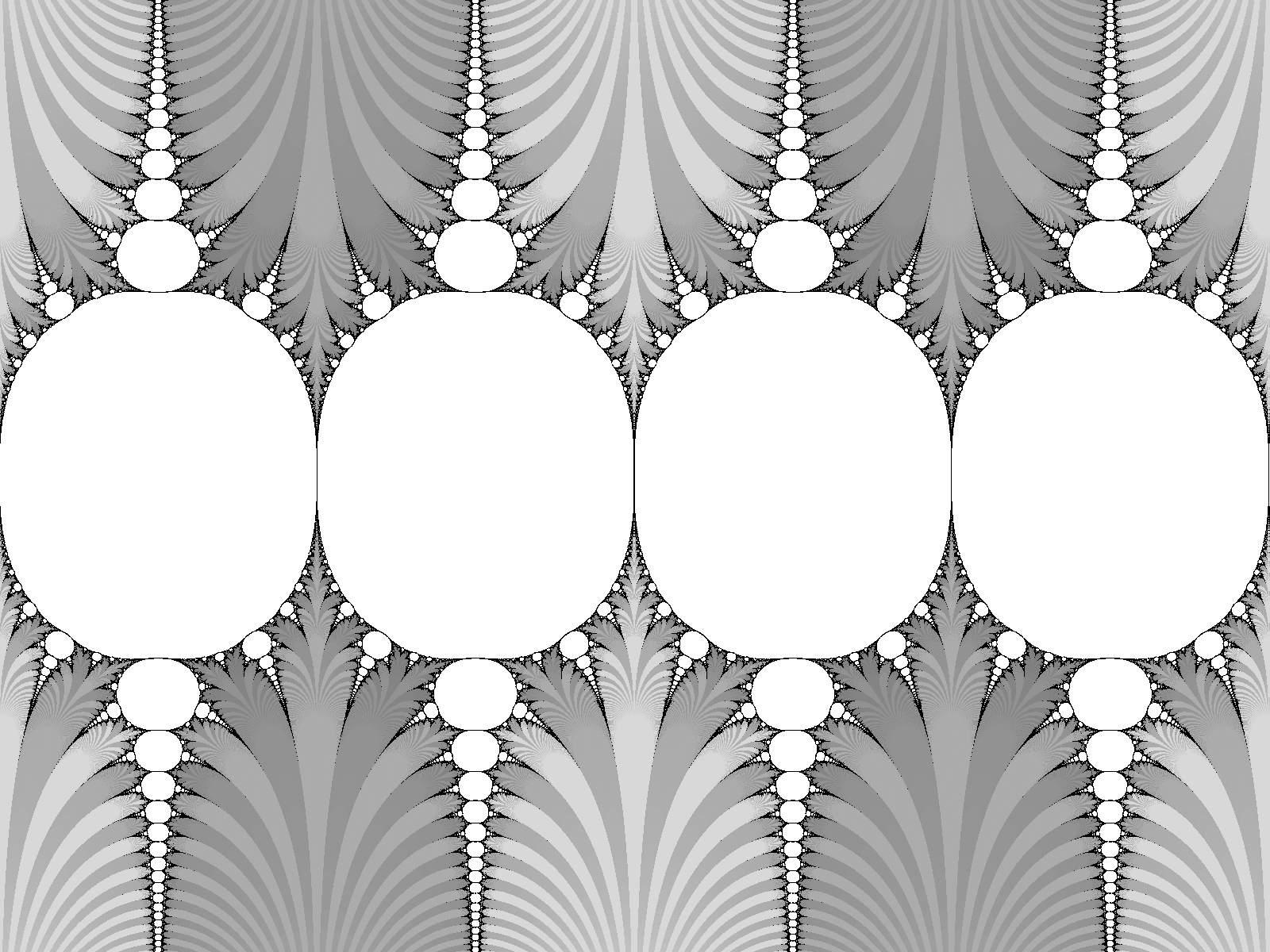}\hskip0.3cm
  \includegraphics[height=45mm]{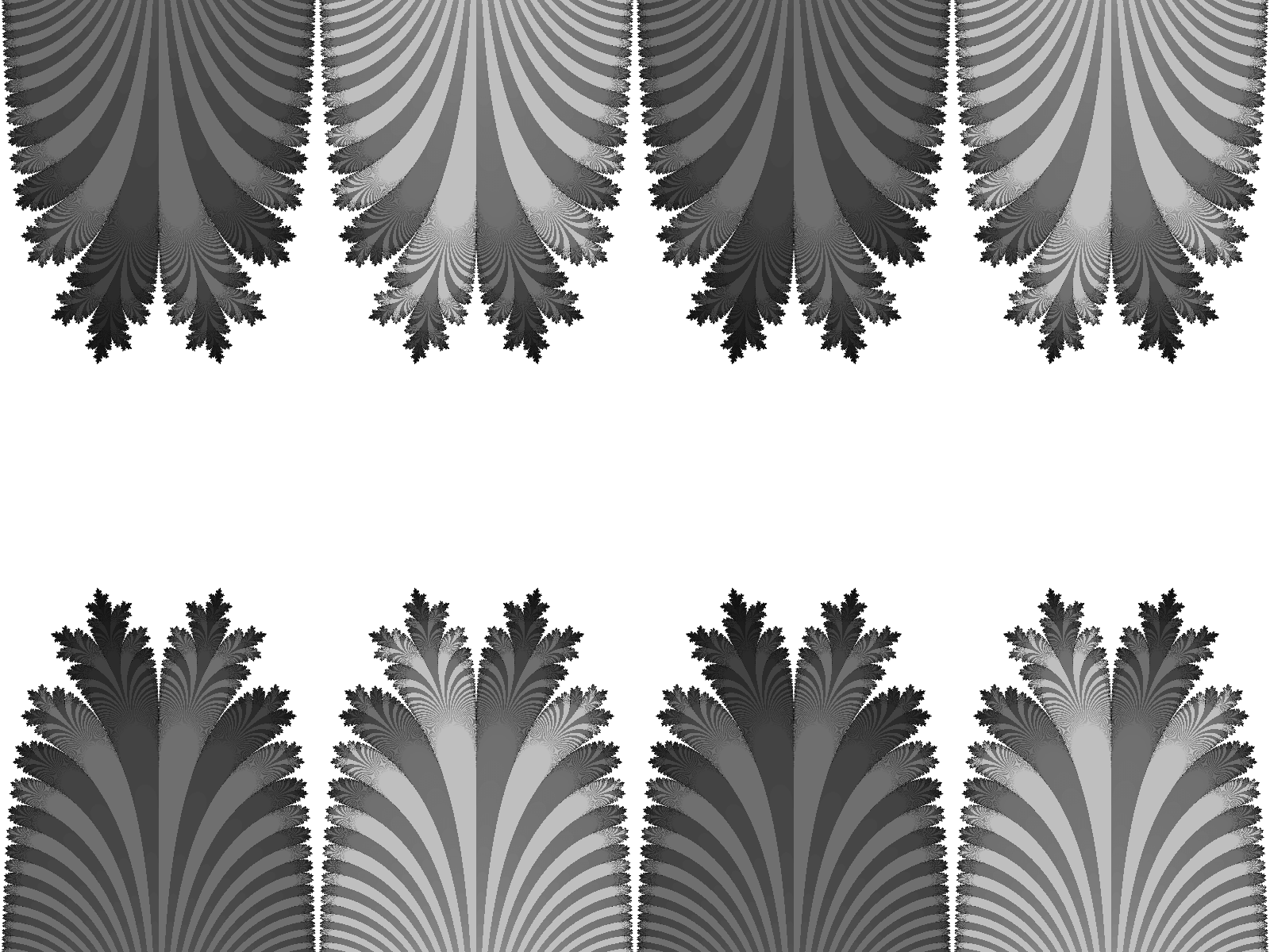}
  \caption{The Fatou sets (white regions) of $f(z)=\sin z$ and $f(z)=\cos z$. Both of these functions have period $2\pi$. It is shown in Theorem \ref{thm-sin} that the area of the complement of the fast escaping set (hence the Fatou set) of $f$ in a vertical strip with width of $2\pi$ is bounded above by $361$.}
  \label{Fig-sin-cos}
\end{figure}

\vskip0.2cm
We collect some notations which will be used throughout of this paper. Let $\N$, $\Z$, $\R$ and $\C$, respectively, be the set of natural numbers, integers, real numbers and complex numbers. For any $x\geq 0$, we use $[x]$ to denote the integer part of $x$. Hence $x-1< [x]\leq x$. For a subset $X$ of $\C$, we use $X^c$ to denote the complement of $X$ in $\C$. All the distance and diameter in this paper are measured in the Euclidean metric and the area is regarded as the two-dimensional planar Lebesgue area. We use $\D(a,r):=\{z\in\C:|z-a|<r\}$ to denote the round disk with center $a\in\C$ and radius $r>0$.

\vskip0.2cm
\noindent\textit{Acknowledgements.} This work is supported by the National Natural Science Foundation of China (grant Nos.\,11671092, 11671191) and the Fundamental Research Funds for the Central Universities (grant No.\,0203-14380013). We would like to thank Lasse Rempe-Gillen for valuable comments which improved the statements of the main results in this paper and Liangwen Liao for helpful conversations.

\section{Distortion lemmas and some basic settings}

\subsection{Distortion quantities}

As in \cite{McM87} and \cite{Sch08}, we introduce some quantities of distortion in this subsection. Let $D$ be a bounded set in the complex plane $\C$ and let $f$ be a holomorphic function defined in a neighbourhood of $D$. We say that $f$ has \textit{bounded distortion} on $D$ if there are positive constants $c$ and $C$, such that for all distinct $x$ and $y$ in $D$, one has
\begin{equation}\label{1}
c<\frac{|f(x)-f(y)|}{|x-y|}<C.
\end{equation}
The quantity
\begin{equation*}
L(f|_D):=\inf{\{{C}/{c}: c \text{ and } C\text{ satisfy } \eqref{1}\}}
\end{equation*}
is the \textit{distortion} of $f$ on $D$. By \eqref{1} we have
\begin{equation*}
\sup_{z\in D}{|f'(z)|}\leq C  \text{\quad and\quad } \inf_{z\in D}{|f'(z)|}\geq c.
\end{equation*}
Therefore, $L(f|_D)$ has a lower bound satisfying
\begin{equation}\label{Lf}
L(f|_D)\geq \frac{\sup_{z\in D}{|f'(z)|}}{\inf_{z\in D}{|f'(z)|}}.
\end{equation}
The equality holds in this inequality if $D$ is a convex domain.

Let $\Area(E)$ be the Lebesgue area of the measurable set $E\subset\mathbb{C}$. If $X$ and $D$ are two measurable subsets of the complex plane with $\Area(D)>0$, we use
\begin{equation*}
\text{density}(X,D):=\frac{\Area(X\cap D)}{\Area(D)}
\end{equation*}
to denote the \textit{density} of $X$ in $D$. If $c$ and $C$ satisfy \eqref{1}, then $c^{2}\Area(X)\leq \Area(f(X))\leq C^{2}\Area(X)$. This means that
\begin{equation}\label{density}
\dens(f(X),f(D))\leq L(f|_D)^{2}\,\dens(X,D).
\end{equation}
The \emph{nonlinearity} of $f$ on $D$ is defined as
\begin{equation}\label{equ-nonlinearity}
N(f|_D):=\sup{\left\{\frac{|f''(z)|}{|f'(z)|}: z\in D\right\}}\cdot\text{diam}(D),
\end{equation}
provided the right-hand side is finite. In the following by \textit{square} we mean a closed square whose sides are parallel to the coordinate axes. We will use the following relation between the distortion and nonlinearity on squares.

\begin{lema}\label{LN}
Let $Q$ be a compact and convex domain in $\C$ (in particular if $Q$ is a square) and let $f$ be a conformal map defined in a neighbourhood of $Q$ with $N(f|_Q)<1$. Then
$$L(f|_Q)\leq 1+2 N(f|_Q).$$
\end{lema}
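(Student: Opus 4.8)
The plan is to estimate the distortion $L(f|_Q)$ by comparing $f'$ at two arbitrary points of $Q$, using the nonlinearity bound on the logarithmic derivative $f''/f'$ along the segment joining them. Since $Q$ is convex, for any $z_1,z_2\in Q$ the straight segment $[z_1,z_2]$ lies in $Q$, so writing $g=f'$ (which is nonvanishing on a neighbourhood of $Q$ because $f$ is conformal) we may integrate $g'/g$ along this segment to control the ratio $g(z_1)/g(z_2)$.

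First I would fix $z_1,z_2\in Q$ and parametrize the segment by $\gamma(t)=z_1+t(z_2-z_1)$, $t\in[0,1]$. Then
\begin{equation*}
\log\frac{f'(z_2)}{f'(z_1)}=\int_0^1\frac{f''(\gamma(t))}{f'(\gamma(t))}\,(z_2-z_1)\,dt,
\end{equation*}
for a suitable branch of the logarithm, so that
\begin{equation*}
\left|\log\frac{f'(z_2)}{f'(z_1)}\right|\leq |z_2-z_1|\cdot\sup_{z\in Q}\frac{|f''(z)|}{|f'(z)|}\leq \diam(Q)\cdot\sup_{z\in Q}\frac{|f''(z)|}{|f'(z)|}=N(f|_Q).
\end{equation*}
Next I would exponentiate: since $N(f|_Q)<1$, the right-hand side is less than $1$, and using the elementary inequalities $e^{-s}\geq 1-s$ and $e^{s}\leq 1+\frac{s}{1-s}\cdot s$-type bounds, or more simply the fact that $|e^{w}-1|\leq \frac{|w|}{1-|w|}$ and $|e^{-w}-1|\leq |w|$ for $|w|<1$, I would deduce that for all $z_1,z_2\in Q$,
\begin{equation*}
\frac{\inf_{z\in Q}|f'(z)|}{\sup_{z\in Q}|f'(z)|}\geq \text{(something like)}\ \ 1-N(f|_Q)\qquad\text{and}\qquad \frac{\sup_{z\in Q}|f'(z)|}{\inf_{z\in Q}|f'(z)|}\leq \frac{1}{1-N(f|_Q)}.
\end{equation*}
Since $Q$ is convex, equality holds in \eqref{Lf}, so $L(f|_Q)=\sup_{z\in Q}|f'(z)|/\inf_{z\in Q}|f'(z)|\leq 1/(1-N(f|_Q))$.

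Finally I would convert the bound $1/(1-N)$ into the linear bound $1+2N$ claimed in the lemma. This is the only genuinely quantitative point: one checks that $\frac{1}{1-s}\leq 1+2s$ holds precisely when $s\leq \frac12$, which is weaker than the hypothesis $N(f|_Q)<1$. So to obtain the stated inequality as written I would either (i) observe that the interesting applications only use $N(f|_Q)$ small, or (ii) sharpen the exponential estimate: writing $N=N(f|_Q)$, a more careful two-sided bound gives $L(f|_Q)\leq e^{2N}$ when the segment estimate is applied symmetrically, or better, bounding $|\log(f'(z_2)/f'(z_1))|$ by $N$ for the worst pair and using $e^{N}\cdot e^{N}$-type control, and then using $e^{t}\leq 1+t+t^2\leq 1+2t$ for... — here one must be careful, since $e^t\le 1+2t$ fails for $t$ near $1$. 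The cleanest route, which I expect the authors take, is to bound the ratio of $|f'|$ values between any two points directly by $e^{N(f|_Q)}$ is too lossy; instead one bounds $\bigl|\,|f'(z_2)|-|f'(z_1)|\,\bigr|\le |f'(z_1)|\,(e^{N}-1)$ and then, since $N<1$ gives $e^N-1\le \frac{N}{1-N}\le 2N$ only for $N\le\frac12$, one in fact needs the hypothesis to really be $N<1$ together with the sharper estimate $e^N - 1 \le N e^N$ combined with... The main obstacle, then, is purely the final elementary inequality book-keeping to land exactly on $1+2N(f|_Q)$ under the hypothesis $N(f|_Q)<1$; the conceptual content — integrating $f''/f'$ over a segment inside the convex set $Q$ and exponentiating — is straightforward. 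I would present the segment-integration estimate carefully and then insert the sharp scalar inequality (likely $e^{s}\le 1+s+s^2 \le 1+2s$ for $0\le s\le 1$, which does hold since $s^2\le s$ there) to close the argument.
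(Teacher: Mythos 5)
Your proposal is correct and follows essentially the same route as the paper: integrate $f''/f'$ along a segment in the convex set $Q$ to get $\log L(f|_Q)\le N(f|_Q)$, then exponentiate. Your worry about the final scalar inequality is unfounded --- $e^s\le 1+2s$ does hold on all of $[0,1]$ (your own chain $e^s\le 1+s+s^2\le 1+2s$ proves it), and this is exactly the step the paper uses.
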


\begin{proof}
Since $f$ is conformal, let $z_0$ be a point in $Q$ such that
\begin{equation*}
|f'(z_0)|=\sup_{z\in Q}|f'(z)|>0.
\end{equation*}
Since $Q$ is convex, for any $z\in Q$ we have
\begin{equation*}
\begin{split}
\frac{|f'(z)-f'(z_0)|}{|f'(z_0)|}
=&~\frac{|\int_{z_0}^z f''(\zeta)d\zeta|}{|f'(z_0)|}\leq \frac{\sup_{z\in Q}|f''(z)|}{|f'(z_0)|}\cdot |z-z_0|\\
\leq &~\sup_{z\in Q}\left\{\frac{|f''(z)|}{|f'(z)|}\right\}\cdot\text{diam}(Q)=N(f|_Q)<1.
\end{split}
\end{equation*}
Therefore, the image of $Q$ under $f'(z)$ is contained in the disk $\D(f'(z_0),|f'(z_0)|)$ and hence $\log f'(z)$ is well-defined on $Q$.

Since $Q$ is compact, let $z_1\in Q$ such that
\begin{equation*}
|f'(z_1)|=\inf_{z\in Q}|f'(z)|>0.
\end{equation*}
Since $Q$ is convex and $\log f'(z)$ is well-defined, we have
\begin{equation*}
\begin{split}
\log L(f|_Q)
=&~\log\frac{|f'(z_0)|}{|f'(z_1)|}\leq |\log f'(z_1)-\log f'(z_0)|\\
= &~\left|\int_{z_0}^{z_1}(\log f'(z))'dz\right|=\left|\int_{z_0}^{z_1}\frac{f''(z)}{f'(z)}dz\right|\\
\leq &~\sup_{z\in Q}\left\{\frac{|f''(z)|}{|f'(z)|}\right\}\cdot\text{diam}(Q)=N(f|_Q).
\end{split}
\end{equation*}
Since $e^x\leq 1+2x$ for $x\in[0,1)$, we have
\begin{equation*}
L(f|_Q)\leq \exp(N(f|_Q))\leq 1+2 N(f|_Q).\qedhere
\end{equation*}
\end{proof}

\begin{rmk}
McMullen notes in \cite{McM87} that $L(f|_Q)$ is bounded above by $1+O(N(f|_Q))$ if $N(f|_Q)$ is small. After that Schubert states in \cite{Sch08} that $L(f|_Q)\leq 1+8 N(f|_Q)$ if $N(f|_Q)<1/4$ but without a proof.
\end{rmk}

Let $n$ be a positive integer. For each $1\leq i\leq n$, let $D_{i}\subset \mathbb{C}$ be an open set and $f_{i}: D_{i} \rightarrow \mathbb{C}$ a conformal map. Let $\sigma$ and $M$ be two positive constants satisfying
\begin{equation*}
|f_{i}'(z)|>\sigma>1 \text{\quad and\quad } \frac{|f''_{i}(z)|}{|f'_{i}(z)|}<M, \text{\quad where } z\in D_{i} \text{ and }1\leq i\leq n.
\end{equation*}
Furthermore, let $Q_{i}\subset D_{i}$, $1\leq i\leq n$ be squares with sides of length $r>0$ satisfying $Q_{i+1}\subset f_{i}(Q_{i})$ for all $1\leq i\leq n-1$. Define $V:=f_{n}(Q_{n})$ and \begin{equation*}
F:=(f_{n}\circ \cdots\circ f_{1})^{-1}: V\rightarrow Q_{1}.
\end{equation*}
Then $F$ is a conformal map. McMullen proved that the distortion of $F$ on $V$ is bounded above by a constant depending only on $\sigma$, $M$ and $r$, but not on $f_i$ and $n$ (\cite{McM87}). Actually, this upper bound can be formulated in the following lemma.

\begin{lema}\label{distortion}
If the sides of length $r$ of $Q_i$ is chosen such that $r\leq 1/(4M)$ for all $1\leq i\leq n$, then the distortion of $F$ on $V$ satisfies
\begin{equation*}
 L(F|_V)\leq \exp{\left(\frac{\sigma}{\sigma-1}\right)}.
\end{equation*}
\end{lema}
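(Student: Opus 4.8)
The plan is to prove Lemma~\ref{distortion} by a telescoping argument that controls the composition $F = (f_n\circ\cdots\circ f_1)^{-1}$ through the nonlinearities of the individual maps, using the expansion $\sigma>1$ to sum a geometric series. First I would reduce the problem to estimating $\log L(F|_V)$ on the convex sets $Q_i$. Since each $f_i$ is conformal and expanding, $F$ is a well-defined conformal map from $V$ onto $Q_1$, and (by the convexity of the squares, via inequality~\eqref{Lf} and the computation in the proof of Lemma~\ref{LN}) we have
\begin{equation*}
\log L(F|_V)\leq \sum_{i=1}^{n} N\bigl((f_i)^{-1}\big|_{W_i}\bigr),
\end{equation*}
where $W_i$ is the appropriate intermediate image domain containing the relevant square. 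The key point is to bound each term $N((f_i)^{-1}|_{W_i})$ by $M r\,\sigma^{-(n-i)}$ or something comparable: the nonlinearity of $f_i^{-1}$ is controlled by $M$ times the diameter of the domain $W_i$, and because all the later maps $f_{i+1},\dots,f_n$ expand by at least $\sigma$, the preimage square $Q_i$ (equivalently the relevant portion of $W_i$) has diameter shrinking geometrically in $n-i$.

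Second, I would carry out the diameter estimate carefully. The square $Q_n$ has side length $r$, hence diameter $r\sqrt{2}$; its image $V$ is what we start from, and pulling back, the preimage of a set of diameter $d$ under $f_i$ has diameter at most $d/\sigma$ since $|f_i'|>\sigma$. Running this backwards from $V$ through $f_n,\dots,f_{i+1}$, the diameter of the domain on which we must estimate the nonlinearity of $f_i^{-1}$ is at most $r\sqrt{2}\,\sigma^{-(n-i)}$ (roughly), so that
\begin{equation*}
N\bigl((f_i)^{-1}\big|_{W_i}\bigr)\leq M\cdot r\sqrt{2}\cdot\sigma^{-(n-i)}\leq \frac{1}{4}\cdot\sqrt{2}\cdot\sigma^{-(n-i)},
\end{equation*}
using the hypothesis $r\leq 1/(4M)$. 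Here I need the elementary fact that the nonlinearity of the inverse of a conformal map $f_i$ with $|f_i''/f_i'|<M$ is $\sup |(f_i^{-1})''/(f_i^{-1})'|$ times the diameter of its domain, and $(f_i^{-1})''/(f_i^{-1})' = -(f_i''/f_i')\circ f_i^{-1}\cdot (f_i^{-1})'$, whose modulus is at most $M/\sigma < M$; combined with the diameter bound this yields the displayed estimate. One must also make sure that $N((f_i^{-1})|_{W_i})<1$ along the way so that Lemma~\ref{LN}-type logarithmic estimates apply to each factor — this follows from the same bound since $\sqrt{2}/4<1$.

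Third, I would sum the geometric series:
\begin{equation*}
\log L(F|_V)\leq \sum_{i=1}^{n}\frac{\sqrt{2}}{4}\,\sigma^{-(n-i)} \leq \frac{\sqrt{2}}{4}\sum_{k=0}^{\infty}\sigma^{-k} = \frac{\sqrt{2}}{4}\cdot\frac{\sigma}{\sigma-1}\leq \frac{\sigma}{\sigma-1},
\end{equation*}
whence $L(F|_V)\leq \exp(\sigma/(\sigma-1))$, as claimed; the constants are generous enough that the crude bound $\sqrt{2}/4<1$ suffices, and the dependence is indeed only on $\sigma$, $M$ (through the choice $r\leq 1/(4M)$), and $r$, not on the individual $f_i$ or on $n$. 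The main obstacle I anticipate is bookkeeping: correctly identifying the nested image domains $W_i$ (since we only know $Q_{i+1}\subset f_i(Q_i)$, not equality) and verifying that the square $Q_i$, rather than all of $W_i$, is where the expansion estimate is needed — one must be careful that the composition $F$ restricted to $V$ actually lands in $Q_1$ and that each intermediate pullback stays inside $D_i$ where the bounds on $f_i'$ and $f_i''/f_i'$ hold. Once the domains are pinned down, the geometric-decay estimate and the summation are routine.
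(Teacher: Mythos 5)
Your proposal is correct and follows essentially the same route as the paper: bound the distortion of the composition by the product (equivalently, the sum of logarithms) of the distortions of the individual factors, control each factor's nonlinearity by $M$ times the diameter of the relevant pullback domain, observe that these diameters decay like $\sigma^{-(n-i)}$ by the expansion hypothesis, and sum the geometric series using $r\leq 1/(4M)$. The only cosmetic difference is that the paper avoids the convexity issue you flag (Lemma~\ref{LN} needs a convex domain, and the image domains $W_i$ need not be convex) by using the identity $L(f^{-1}|_{f(V)})=L(f|_V)$ to work with the forward maps $f_i$ on small convex squares $Q_i'\subset Q_i$ enclosing the pullback sets $V_i$, which is exactly the bookkeeping fix your argument needs.
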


\begin{proof}
Let $g_i$ be the inverse of $f_i$ which maps $f_i(Q_i)$ to $Q_i$ for $1\leq i\leq n$. Recall that $V=f_{n}(Q_{n})$. Define $V_i:=g_i\circ\cdots\circ g_{n}(V)$, where $1\leq i\leq n$. In particular, $V_n=g_n(V)=Q_n$. Since $|f_{i}'(z)|>\sigma>1$ for all $1\leq i\leq n$, we have
\begin{equation*}
\diam(V_i)\leq\frac{\sqrt{2}r}{\sigma^{n-i}}, \text{\quad for all } 1\leq i\leq n.
\end{equation*}
Note that $V_i\subset Q_i\subset D_i$ for $1\leq i\leq n$ since $Q_{i+1}\subset f_{i}(Q_{i})$ for all $1\leq i\leq n-1$. This means that there exists a square $Q_i'\subset Q_i$ such that $V_i\subset Q_i'$ and the length of the sides of $Q_i'$ is at most $\sqrt{2}r/\sigma^{n-i}$.
Hence by \eqref{equ-nonlinearity}, the nonlinearity of $f_i$ on $Q_i'$ satisfies
\begin{equation*}
N(f_i|_{Q_i'})=\left(\sup_{z\in Q_i'}\frac{|f_i''(z)|}{|f_i'(z)|}\right)\cdot\text{diam}(Q_i')\leq \frac{2Mr}{\sigma^{n-i}}\leq \frac{1}{2}.
\end{equation*}
By Lemma \ref{LN}, we have
\begin{equation*}
L(f_i|_{Q_i'})\leq 1+\frac{4Mr}{\sigma^{n-i}},  \text{\quad for all } 1\leq i\leq n.
\end{equation*}

For any holomorphic functions $f$ and $g$, it is straightforward to verify that the distortion of $f$ and $g$ satisfies\footnote{We suppose that the inverse of $f$ exists in the first equality.}
\begin{equation*}
L(f|_V)=L(f^{-1}|_{f(V)}) \quad \text{and} \quad L((g\circ f)|_{V})\leq L(f|_V)L(g|_{f(V)}).
\end{equation*}
Hence, we have
\begin{align*}
& L(F|_V)=L((f_{n}\circ\cdots\circ f_{1})|_{V_{1}}) \\
\leq & L(f_{1}|_{V_{1}})L(f_{2}|_{V_{2}})\cdots L(f_{n}|_{V_{n}}) \leq L(f_{1}|_{Q_1'})L(f_{2}|_{Q_2'})\cdots L(f_{n}|_{Q_n'})\\
\leq & \prod\limits_{i=0}^{n-1}\left(1+\frac{4Mr}{\sigma^i}\right)\leq\prod\limits_{i=0}^{n-1}\left(1+\frac{1}{\sigma^i}\right).
\end{align*}
Since $\log(1+x)\leq x$ for all $x>0$, we have
\begin{equation*}
L(F|_V)\leq\exp\left(\sum_{i=0}^{n-1}\frac{1}{\sigma^i}\right)
<\exp\left(\sum_{i=0}^{\infty}\frac{1}{\sigma^i}\right)=\exp{\left(\frac{\sigma}{\sigma-1}\right)}.\qedhere
\end{equation*}
\end{proof}

\subsection{Nesting conditions, density and area}

In his proof of the existence of Julia sets of entire functions having positive area, McMullen introduced a system of compact sets which satisfies the nesting conditions \cite{McM87}. We now recall the precise definition.

\begin{defi}[{Nesting conditions}]
For $k\in\N$, let $\mathcal{E}_k$ be a finite collection of measurable subsets of $\mathbb{C}$, i.e. $\ME_k:=\{E_{k,i}: 1\leq i\leq d_{k}\},$  where each $E_{k,i}$ is a measurable subset of $\mathbb{C}$ and $d_{k}:= \# \ME_k<+\infty$. We say that $\{\ME_k\}^{\infty}_{k=0}$ satisfies the \textit{nesting conditions} if $\ME_0=\{E_{0, 1}\}$, where $E_{0, 1}$ is a compact connected measurable set and for all $k\in \mathbb{N}$,
\begin{enumerate}
\item every $E_{k+1,i}\in \ME_{k+1}$ is contained in a $E_{k,j}\in \ME_k$, where $1\leq i\leq d_{k+1}$ and $1\leq j\leq d_k$;
\item every $E_{k,i}\in \ME_k$ contains a $E_{k+1,j}\in \ME_{k+1}$, where $1\leq i\leq d_k$ and $1\leq j\leq d_{k+1}$;
\item $\Area(E_{k,i}\cap E_{k,j})=0$ for all $1\leq i,j\leq d_{k}$ with $i\neq j$; and
\item there is $\rho_{k}>0$ such that for all $1\leq i\leq d_k$ and $E_{k,i}\in \ME_k,$ we have\footnote{Note that $\ME_k$ is a collection of measurable sets for $k\in\N$. For simplicity, sometimes we will not distinguish $\ME_k$ and the union of its elements $\mathop\cup\limits_{i=1}^{d_k}E_{k,i}$.}
\begin{equation*}
\dens(\ME_{k+1}, E_{k,i}):=\dens\Big(\mathop\cup\limits_{j=1}^{d_{k+1}}E_{k+1,j}, E_{k,i}\Big)\geq\rho_{k}.
\end{equation*}
\end{enumerate}
 \end{defi}

Let $\{\ME_k\}^{\infty}_{k=0}$ be a sequence satisfying the nesting conditions. Define $E:=\cap_{k=0}^{\infty}\ME_k$. The following lemma was established in \cite[Proposition 2.1]{McM87}.

\begin{lema}\label{nest}
The density of $E$ in $E_{0,1}$ satisfies
\begin{equation*}
\dens(E,E_{0,1})\geq\prod\limits^{\infty}_{k=0}\rho_{k}.
\end{equation*}
\end{lema}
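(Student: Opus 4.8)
The plan is to reduce everything to a single one-step area estimate and then iterate. Throughout, write $E_k:=\bigcup_{i=1}^{d_k}E_{k,i}$ for the union of the members of $\ME_k$; this is a measurable set since $d_k<\infty$. The first step is to record the easy consequences of the nesting conditions: by (a) we have $E_{k+1}\subseteq E_k$ for every $k$, so $\{E_k\}_{k\geq0}$ is a decreasing sequence of measurable sets, $E=\bigcap_{k\geq0}E_k$ is measurable, and $E\subseteq E_0=E_{0,1}$. Consequently $\dens(E,E_{0,1})=\Area(E)/\Area(E_{0,1})$ (here $\Area(E_{0,1})>0$, as otherwise the density appearing in (d) would be undefined), and the assertion reduces to showing $\Area(E)\geq\Area(E_{0,1})\prod_{k=0}^{\infty}\rho_k$.

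The heart of the argument is the inequality $\Area(E_{k+1})\geq\rho_k\,\Area(E_k)$. To obtain it I would use (c) to turn areas of unions into sums: since $\Area(E_{k,i}\cap E_{k,j})=0$ for $i\neq j$, we have $\Area(E_k)=\sum_{i=1}^{d_k}\Area(E_{k,i})$; and since $E_{k+1}\subseteq E_k=\bigcup_i E_{k,i}$, the sets $E_{k+1}\cap E_{k,i}$ cover $E_{k+1}$ and still have pairwise null intersections, so $\Area(E_{k+1})=\sum_{i=1}^{d_k}\Area(E_{k+1}\cap E_{k,i})$. Written out, condition (d) says precisely that $\Area(E_{k+1}\cap E_{k,i})\geq\rho_k\,\Area(E_{k,i})$ for each $i$; summing over $i$ gives $\Area(E_{k+1})\geq\rho_k\,\Area(E_k)$.

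Iterating from $k=0$ yields $\Area(E_k)\geq\Area(E_{0,1})\prod_{j=0}^{k-1}\rho_j$ for all $k\geq1$. Finally, $E_{0,1}$ is compact, hence bounded, so $\Area(E_0)<\infty$, and continuity of Lebesgue measure along the decreasing sequence $E_k\downarrow E$ gives
\begin{equation*}
\Area(E)=\lim_{k\to\infty}\Area(E_k)\geq\Area(E_{0,1})\prod_{j=0}^{\infty}\rho_j,
\end{equation*}
which is the claim; when the infinite product vanishes the inequality is vacuous.

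I do not expect a genuine obstacle here: the only points needing a little care are the measure-theoretic bookkeeping with the null overlaps (used twice, once for the family $\{E_{k,i}\}_i$ and once for $\{E_{k+1}\cap E_{k,i}\}_i$) and the appeal to continuity from above, which relies on the finiteness of $\Area(E_{0,1})$ supplied by compactness. Notice that condition (b) plays no role in the estimate — indeed, when $\prod_k\rho_k>0$ it follows a posteriori, since then $\Area(E)>0$ forces $E\neq\emptyset$.
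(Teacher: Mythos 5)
Your argument is correct and complete: the telescoping estimate $\Area(E_{k+1})\geq\rho_k\,\Area(E_k)$ obtained from conditions (c) and (d), followed by continuity of Lebesgue measure from above (justified by the compactness of $E_{0,1}$), is exactly the standard proof of this density bound. The paper itself gives no proof, deferring to McMullen's Proposition 2.1, and your write-up fills that in faithfully, including the correct observation that condition (b) is not needed for the estimate.
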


Now we give the definition of some regions which are needed in the following. For $x>0$, we define
\begin{equation}\label{equ-Rx}
\Lambda(x):=\{z\in \mathbb{C}: |\re z|>x\}.
\end{equation}
For any given $m,n\in \mathbb{Z}$ and $r>0$, we define the closed square by
\begin{equation*}
Q_r^{m,n}:=\{z\in\C: mr\leq \re z\leq (m+1)r \hspace{0.2cm}\text{and}\hspace{0.2cm}  nr\leq \im z \leq (n+1)r \}.
\end{equation*}
Let
\begin{equation}\label{equ-Q-r}
\MQ_r:=\{Q_r^{m,n}:m,n\in\Z\}
\end{equation}
be a partition of $\C$ by the grids with sides of length $r>0$. Sometimes we write $Q_r^{m,n}\in\MQ_r$ as $Q_r$ if we don't want to emphasize the superscript of $Q_r^{m,n}$.

\begin{lema}\label{mQ}
Let $Q\subset\mathbb{C}$ be a square with sides of length $r>0$ and suppose that $f$ is conformal in a neighbourhood of $Q$ with distortion $L(f|_Q)<\infty$. For any $x>0$ and $z_0\in Q$, we have
\begin{equation*}
\textup{Area}\left(\cup\{Q_{r}\in \MQ_r: Q_{r}\cap\left(\partial{f(Q)}\cup(\partial{\Lambda(x)}\cap f(Q))\right)\neq\emptyset\}\right)\leq c r^{2},
\end{equation*}
where $c=16+12\sqrt{2}L(f|_Q)|f'(z_0)|$.
\end{lema}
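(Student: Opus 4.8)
\noindent\emph{Proof sketch.} The plan is to reduce the statement to a neighbourhood estimate for a curve together with an elementary count for two vertical segments. Write $K:=\partial f(Q)\cup(\partial\Lambda(x)\cap f(Q))$ and set $\delta:=\sqrt{2}\,r$. Since every square of $\MQ_r$ has diameter exactly $\delta$, a square of $\MQ_r$ meeting $K$ is contained in the closed $\delta$-neighbourhood $N_\delta(K):=\{z\in\C:\mathrm{dist}(z,K)\le\delta\}$; and since distinct grid squares have disjoint interiors, the union $U$ in the statement satisfies $\Area(U)=r^{2}\,\nu(K)$, where $\nu(E):=\#\{Q_r\in\MQ_r:Q_r\cap E\neq\emptyset\}$. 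By subadditivity $\nu(K)\le\nu(\partial f(Q))+\nu(\partial\Lambda(x)\cap f(Q))$, so it is enough to estimate these two quantities. Two elementary observations will be used repeatedly: since $Q$ is a square, hence convex, equality holds in \eqref{Lf}, so $\sup_Q|f'|=L(f|_Q)\inf_Q|f'|\le L(f|_Q)\,|f'(z_0)|$; and integrating $f'$ along segments inside the convex set $Q$ gives $\mathrm{length}(\partial f(Q))=\int_{\partial Q}|f'|\,|dz|\le 4r\sup_Q|f'|$ and $\diam f(Q)\le\sqrt{2}\,r\sup_Q|f'|$.

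To bound $\nu(\partial f(Q))$: the set $\partial f(Q)=f(\partial Q)$ is a rectifiable (indeed piecewise real-analytic Jordan) curve of length at most $4r\sup_Q|f'|$, and every grid square meeting it lies in $N_\delta(\partial f(Q))$. Using the tube estimate $\Area(N_\delta(\gamma))\le 2\delta\,\mathrm{length}(\gamma)+\pi\delta^{2}$ for the $\delta$-neighbourhood of a rectifiable curve $\gamma$ and dividing by $r^{2}$,
\begin{equation*}
\nu(\partial f(Q))\le\frac{\Area(N_\delta(\partial f(Q)))}{r^{2}}\le\frac{2\delta\cdot 4r\sup_Q|f'|+\pi\delta^{2}}{r^{2}}=8\sqrt{2}\,\sup_Q|f'|+2\pi .
\end{equation*}

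To bound $\nu(\partial\Lambda(x)\cap f(Q))$: by \eqref{equ-Rx}, $\partial\Lambda(x)$ is the union of the two vertical lines $\{\re z=x\}$ and $\{\re z=-x\}$, so $\partial\Lambda(x)\cap f(Q)$ is contained in two vertical segments, each of length at most $\diam f(Q)\le\sqrt{2}\,r\sup_Q|f'|$, because a subset of one vertical line that lies in $f(Q)$ has diameter at most $\diam f(Q)$ and hence lies in a segment of that length. A vertical segment of length $\lambda$ meets at most $2(\lambda/r+2)$ squares of $\MQ_r$: it lies in at most two grid columns, and its projection to the imaginary axis is an interval of length $\lambda$, meeting at most $\lambda/r+2$ grid rows. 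Therefore $\nu(\partial\Lambda(x)\cap f(Q))\le 2\cdot 2\bigl(\sqrt{2}\,\sup_Q|f'|+2\bigr)=4\sqrt{2}\,\sup_Q|f'|+8$. Adding the two bounds, and using $\sup_Q|f'|\le L(f|_Q)\,|f'(z_0)|$ together with $2\pi+8<16$,
\begin{equation*}
\nu(K)\le 12\sqrt{2}\,\sup_Q|f'|+2\pi+8\le 16+12\sqrt{2}\,L(f|_Q)\,|f'(z_0)|=c ,
\end{equation*}
and multiplying by $r^{2}$ completes the proof.

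The one place where a small argument is required is the tube estimate $\Area(N_\delta(\gamma))\le 2\delta\,\mathrm{length}(\gamma)+\pi\delta^{2}$ for a rectifiable curve $\gamma$. For a polygonal curve it follows by induction on the number of edges: when a new edge $e$ is appended at a vertex $v$, both the previous union and the stadium $N_\delta(e)$ contain the disk $\D(v,\delta)$, so the net increase in area is at most $\Area(N_\delta(e))-\pi\delta^{2}=2\delta\,\mathrm{length}(e)$; summing over the edges gives the claim. The general case follows by approximating $\gamma$ by inscribed polygons (whose lengths do not exceed that of $\gamma$) and passing to the limit. Apart from this, the only thing to watch is bookkeeping: the two parts of $K$ must be estimated separately — absorbing $\partial\Lambda(x)\cap f(Q)$ into the curve neighbourhood would raise the coefficient of $L(f|_Q)\,|f'(z_0)|$ past $12\sqrt{2}$ — and the factor $2$ for the two possible grid columns in the segment count must be retained.
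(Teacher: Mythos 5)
Your proof is correct, and it reaches the paper's exact constant $c=16+12\sqrt{2}\,L(f|_Q)|f'(z_0)|$. The overall decomposition is the same as the paper's: split the exceptional set into $\partial f(Q)$ and $\partial\Lambda(x)\cap f(Q)$, bound the length of the first by $4r\sup_Q|f'|$ and the second by vertical segments of length at most $\diam f(Q)\le\sqrt2\,r\sup_Q|f'|$, and convert $\sup_Q|f'|$ into $L(f|_Q)|f'(z_0)|$ via \eqref{Lf}. Where you genuinely diverge is the counting step for squares meeting a rectifiable curve: the paper proves combinatorially, by induction on arcs of length $2\sqrt2\,r$, that a curve of length $l$ meets at most $12+2\sqrt2\,l/r$ grid squares, whereas you pass through the area of the $\delta$-neighbourhood with $\delta=\sqrt2\,r$ and the tube estimate $\Area(N_\delta(\gamma))\le 2\delta\,\mathrm{length}(\gamma)+\pi\delta^2$, obtaining $2\pi+2\sqrt2\,l/r$. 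The two bounds have the same leading coefficient $2\sqrt2$, so the linear term in $|f'(z_0)|$ comes out identical; your additive constant for the curve is smaller ($2\pi$ vs.\ $12$), which is exactly the slack you need because you (correctly) charge the two vertical lines separately and pick up $8$ rather than the paper's $4$ there. The tube argument is arguably cleaner and more robust (it needs only rectifiability, and your induction-on-edges proof of the tube estimate plus polygonal approximation is sound), at the cost of importing a small amount of geometric measure theory that the paper's elementary chunking avoids. Both are complete proofs; no gap.
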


This lemma was established in \cite[Lemma 2.3]{Sch08} with a different coefficient $c$. For completeness we include a proof here and the argument is slightly different.

\begin{proof}
If $\gamma\subset\mathbb{C}$ is a vertical line with length $l_1>0$, it is clear that
\begin{equation}\label{a}
\#\{Q_{r}\in \MQ_r: Q_{r}\cap\gamma\neq\emptyset\}\leq 4+\frac{2l_{1}}{r}.
\end{equation}
Let $\gamma\subset\mathbb{C}$ be a continuous curve with length $l_{2}=2\sqrt{2}kr>0$, where $k$ is a positive integer. We claim that
\begin{equation}\label{c}
k':=\#\{Q_{r}\in \MQ_r: Q_{r}\cap\gamma\neq\emptyset\}\leq 4+8k.
\end{equation}
Indeed, if $k=1$, then it is easy to see $k'\leq12$. Assume that $k=n$ and in this case $k'\leq4+8n$. If $k=n+1$, let $\gamma(t):[0,1]\to\C$ be a parameterization of $\gamma$ such that the length of $\gamma([0,t_0])$ is $2\sqrt{2}nr$ while the length of $\gamma([t_0,1])$ is $2\sqrt{2}r$, where $0<t_0<1$. Since $\gamma([t_0,1])$ can intersect at most $8$ squares while $\gamma([0,t_0])$ can intersect at most $4+8n$ by the assumption, it follows that $k'\leq 4+8(n+1)$ if $k=n+1$. Hence the claim \eqref{c} is proved.

For the general case, we assume that $\gamma\subset\mathbb{C}$ is a continuous curve with length $l_3>0$. Let $[x]$ be the integer part of $x>0$. By \eqref{c}, we have
\begin{equation}\label{b}
\#\{Q_{r}\in \MQ_r: Q_{r}\cap\gamma\neq\emptyset\}\leq 4+8\left[\frac{l_3}{2\sqrt{2}r}\right]+8\leq 12+\frac{2\sqrt{2}\,l_3}{r}.
\end{equation}

Since $f$ is a conformal map in a neighbourhood of $Q$, we conclude that $\partial f(Q)=f(\partial Q)$. From \eqref{Lf}, the length of $\partial f(Q)$ satisfies
\begin{equation}\label{10}
\begin{split}
 l_4: =\int_{\partial f(Q)}|d\xi|
 =\int_{\partial Q}|f'(z)||dz|
&~ \leq\sup\limits_{z\in Q}|f'(z)|\cdot4r\\
&~ \leq 4\,L(f|_Q)|f'(z_0)|\,r.
\end{split}
\end{equation}
Similarly, the length of $\partial \Lambda(x)\cap f(Q)$ satisfies
\begin{equation}\label{11}
\begin{split}
l_5 \leq 2\,\diam f(Q)
&~\leq 2\sup\limits_{z\in Q}|f'(z)|\cdot\text{diam}(Q)\\
&~\leq 2\sqrt{2}\,L(f|_Q)|f'(z_0)|\,r.
\end{split}
\end{equation}
By \eqref{a}, \eqref{b}, \eqref{10} and \eqref{11}, we have
\begin{align*}
      &\#\{Q_{r}\in \MQ_r: Q_{r}\cap\left(\partial{f(Q)}\cup(\partial{\Lambda(x)}\cap f(Q))\right)\neq\emptyset\} \\
\leq~& \Big(4+\frac{2l_5}{r}\Big)+\Big(12+\frac{2\sqrt{2}\,l_4}{r}\Big)
   = 16+\frac{2l_5+2\sqrt{2}\,l_4}{r}\\
\leq~& 16+12\sqrt{2}\,L(f|_Q)|f'(z_0)|.
\end{align*}
The proof is finished if we notice that the area of each $Q_r$ is $r^2$.
\end{proof}

\subsection{Basic properties of the polynomial and entire function}

For $N\geq 2$, let $P$ be a polynomial with degree at least $2$ which has the form
\begin{equation*}
P(z)=a_0+a_1 z+\cdots+a_N z^N,
\end{equation*}
where $a_i\in\C$ for $0\leq i\leq N$ and $a_0 a_N\neq 0$. In the rest of this article, the polynomial $P$ will be fixed. We denote
\begin{equation}\label{equ-K}
K:=\max\{|a_{0}|, |a_{1}|, \cdots,|a_N|\}>0.
\end{equation}

\begin{lema}\label{pp}
Let $\varepsilon>0$ be any given constant. The following statements hold:
\begin{enumerate}
\item If $|z|\geq 1+\tfrac{K}{\varepsilon\,|a_N|}>1$, then
\begin{equation*}
|P(z)-a_N z^N|\leq\varepsilon\,|a_N|\,|z|^N;
\end{equation*}
\item If $|z|\leq \tfrac{\varepsilon |a_0|}{K+\varepsilon |a_0|}<1$, then
\begin{equation*}
|P(z)-a_0|\leq \varepsilon\,|a_0|.
\end{equation*}
\end{enumerate}
\end{lema}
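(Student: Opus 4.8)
The plan is to obtain both estimates directly from the triangle inequality together with a geometric-series bound, using only that every coefficient of $P$ satisfies $|a_i|\leq K$ by the definition \eqref{equ-K}. The two parts are handled by splitting off a different ``main'' term: the top-degree term $a_Nz^N$ when $|z|$ is large, and the constant term $a_0$ when $|z|$ is small.

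For part (a), I would write $P(z)-a_Nz^N=\sum_{i=0}^{N-1}a_iz^i$ and estimate
\begin{equation*}
|P(z)-a_Nz^N|\leq\sum_{i=0}^{N-1}|a_i|\,|z|^i\leq K\sum_{i=0}^{N-1}|z|^i=K\,\frac{|z|^N-1}{|z|-1}\leq\frac{K\,|z|^N}{|z|-1},
\end{equation*}
where the geometric-sum identity is valid because the hypothesis $|z|\geq 1+\tfrac{K}{\varepsilon|a_N|}>1$ forces $|z|>1$. That same hypothesis is exactly the inequality $|z|-1\geq \tfrac{K}{\varepsilon|a_N|}$, i.e. $\tfrac{K}{|z|-1}\leq\varepsilon|a_N|$, and substituting this into the last bound gives $|P(z)-a_Nz^N|\leq\varepsilon|a_N|\,|z|^N$, as claimed.

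For part (b), I would instead group $P(z)-a_0=\sum_{i=1}^{N}a_iz^i$ and bound its modulus by $K\sum_{i=1}^{N}|z|^i\leq K\sum_{i=1}^{\infty}|z|^i=\tfrac{K|z|}{1-|z|}$, the series converging because the hypothesis $|z|\leq\tfrac{\varepsilon|a_0|}{K+\varepsilon|a_0|}<1$ forces $|z|<1$. A one-line rearrangement shows that $\tfrac{K|z|}{1-|z|}\leq\varepsilon|a_0|$ is equivalent to $|z|\,(K+\varepsilon|a_0|)\leq\varepsilon|a_0|$, hence to the stated bound on $|z|$, which closes the argument. There is essentially no obstacle here: the computation is elementary, and the only points requiring care are to invoke the strict inequalities $|z|>1$ in (a) and $|z|<1$ in (b) (built into the hypotheses via the trailing ``$>1$'' and ``$<1$'') so that the geometric sums make sense, and to check that the final rearrangement in each part is a genuine equivalence rather than a one-way implication.
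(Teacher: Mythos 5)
Your proposal is correct and follows essentially the same route as the paper: split off the dominant term ($a_Nz^N$ for large $|z|$, $a_0$ for small $|z|$), bound the remaining coefficients by $K$, sum the geometric series to get $K|z|^N/(|z|-1)$ resp.\ $K|z|/(1-|z|)$, and convert the hypothesis on $|z|$ into the desired $\varepsilon$-bound. The only cosmetic difference is that you phrase the last step in (b) as an equivalence, whereas the paper only uses the one implication it needs.
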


\begin{proof}
By the definition of $K$ in \eqref{equ-K}, if $|z|\geq 1+\tfrac{K}{\varepsilon\,|a_N|}>1$, then
\begin{equation*}
|P(z)-a_N z^N|\leq K (1+|z|+\cdots+|z|^{N-1})<K\,\frac{|z|^N}{|z|-1}\leq\varepsilon\,|a_N|\,|z|^N.
\end{equation*}
On the other hand, if $|z|\leq \tfrac{\varepsilon |a_0|}{K+\varepsilon |a_0|}<1$, then
\begin{equation*}
|P(z)-a_0|\leq K (|z|+\cdots+|z|^N)<K\,\frac{|z|}{1-|z|}\leq \varepsilon\,|a_0|. \qedhere
\end{equation*}
\end{proof}

Note that
\begin{equation*}
P(z)/z=a_0 z^{-1}+a_1+\cdots+a_N z^{N-1}
\end{equation*}
is a rational function. Let $\D(a,r):=\{z\in\C:|z-a|<r\}$ be the open disk centered at $a\in\C$ with radius $r>0$. For each $R>0$ and $\theta,\xi\in[0,2\pi)$, we denote a closed domain
\begin{equation*}
\mathbb{U}(R,\theta,\xi):=\{z\in\C:|z|\geq R \text{ and } \theta-\tfrac{\xi}{2}\leq \arg(z)\leq \theta+\tfrac{\xi}{2}\}.
\end{equation*}

\begin{lema}\label{lema-univalent}
For every $\theta\in[0,2\pi)$, the rational function $P(z)/z$ is univalent in a neighborhood of $\mathbb{U}(2R_1,\theta,\tfrac{\pi}{N-1})$ and $\overline{\D}(0,R_2/2)$, where
\begin{equation*}
R_1=1+\frac{4K}{|a_N|} \quad\text{and}\quad R_2=\frac{|a_0|}{4K+|a_0|}.
\end{equation*}
\end{lema}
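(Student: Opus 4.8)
The plan is to treat the two regions separately. For $\overline{\D}(0,R_2/2)$ I would pass to the reciprocal $\psi(z):=z/P(z)$; since $P(z)/z=1/\psi(z)$ and inversion is a bijection of $\EC$, it suffices to show $\psi$ is univalent in a neighbourhood of $\overline{\D}(0,R_2/2)$. Lemma~\ref{pp}(b) with $\varepsilon=\tfrac14$ (for which $R_2=\tfrac{(1/4)|a_0|}{K+(1/4)|a_0|}$) gives $|P(z)-a_0|\leq\tfrac14|a_0|$, hence $|P(z)|\geq\tfrac34|a_0|>0$, on $\overline{\D}(0,R_2)$; thus $\psi$ is holomorphic near $\overline{\D}(0,R_2/2)$, with $\psi(0)=0$ and $\psi'(0)=1/a_0\neq0$. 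From $P(z)-zP'(z)=a_0-\sum_{k=2}^{N}(k-1)a_kz^k$ one gets $|P(z)-zP'(z)-a_0|\leq K|z|^2/(1-|z|)^2$, which — using $K\geq|a_0|$, so $R_2\leq\tfrac15$ and $KR_2\leq\tfrac{|a_0|}4$ — is a small fraction of $|a_0|$ on $\overline{\D}(0,R_2/2)$, and likewise $|P(z)^2-a_0^2|$ is a fraction (less than $1$) of $|a_0|^2$ there; writing $a_0\psi'(z)=\tfrac{a_0(P(z)-zP'(z))}{P(z)^2}$ as a quotient of two numbers close to $1$ then forces $\re\bigl(a_0\psi'(z)\bigr)>0$ on $\overline{\D}(0,R_2/2)$, hence (the estimates being strict) on a slightly larger disk. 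Integrating $\psi'$ along segments there (the disk being convex) shows $\psi$, and therefore $P(z)/z$, is univalent in a neighbourhood of $\overline{\D}(0,R_2/2)$.

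For the sector I would conjugate by the exponential. Since $\exp$ maps the convex half-strip $\Pi:=\{\zeta\in\C:\re\zeta\geq\log(2R_1),\ |\im\zeta-\theta|\leq\tfrac{\pi}{2(N-1)}\}$ biholomorphically onto $\mathbb{U}(2R_1,\theta,\tfrac{\pi}{N-1})$, it is enough to show $f(\zeta):=P(e^{\zeta})e^{-\zeta}=\sum_{k=0}^{N}a_ke^{(k-1)\zeta}$ is univalent in a neighbourhood of $\Pi$. Here $f'(\zeta)=(N-1)a_Ne^{(N-1)\zeta}\bigl(1+w(\zeta)\bigr)$ with $w(\zeta)=\tfrac{1}{(N-1)a_N}\bigl(-a_0e^{-N\zeta}+\sum_{k=2}^{N-1}(k-1)a_ke^{-(N-k)\zeta}\bigr)$, and since $\re\zeta\geq\log(2R_1)$ with $2R_1=2+\tfrac{8K}{|a_N|}\geq\max\{10,\tfrac{8K}{|a_N|}\}$ (as $K\geq|a_N|$), so that $e^{-\re\zeta}\leq\min\{\tfrac1{10},\tfrac{|a_N|}{8K}\}$, a geometric-series bound gives
\[
\sup_{\zeta\in\Pi}|w(\zeta)|\ \leq\ \frac{1}{8(N-1)}\Bigl(1+\frac{10(N-2)}{9}\Bigr)\ =\ \frac{5}{36}-\frac{1}{72(N-1)}\ <\ \frac{5}{36}.
\]
If now $f(\zeta_1)=f(\zeta_2)$ with $\zeta_1\neq\zeta_2$ in $\Pi$, integrating $f'$ along $[\zeta_1,\zeta_2]\subset\Pi$ and cancelling the non-zero factor $(N-1)a_Ne^{(N-1)\zeta_1}(\zeta_2-\zeta_1)$ yields $\int_0^1 e^{ts}\bigl(1+w(\zeta_1+t(\zeta_2-\zeta_1))\bigr)\,dt=0$, where $s:=(N-1)(\zeta_2-\zeta_1)$ has $|\im s|\leq\pi$; hence $\bigl|\int_0^1 e^{ts}\,dt\bigr|\leq\bigl(\sup_{\Pi}|w|\bigr)\int_0^1 e^{t\re s}\,dt<\tfrac5{36}\int_0^1 e^{t\re s}\,dt$.

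The key ingredient, to be proved separately, is
\[
\Bigl|\int_0^1 e^{ts}\,dt\Bigr|=\Bigl|\frac{e^s-1}{s}\Bigr|\ \geq\ \frac{2}{\pi}\int_0^1 e^{t\,\re s}\,dt\qquad\text{whenever }|\im s|\leq\pi
\]
(with $\tfrac2\pi$ sharp, equality at $s=\pm i\pi$). Writing $s=u+iv$ and using $e^s-1=2e^{s/2}\sinh(s/2)$ together with $|\sinh(x+iy)|^2=\sinh^2x+\sin^2y$, this is equivalent to $\tfrac{u^2}{u^2+v^2}\bigl(1+\tfrac{\sin^2(v/2)}{\sinh^2(u/2)}\bigr)\geq\tfrac4{\pi^2}$, which one verifies for $|v|\leq\pi$ by splitting on whether $v^2\leq3u^2$ (then it is immediate) or $v^2>3u^2$ (then $|u|<\pi/\sqrt3$, and the bounds $\sinh^2(u/2)\leq\tfrac{4u^2}{\pi^2}$ on that range and $\sin^2(v/2)\geq\tfrac{v^2}{\pi^2}$ close it). Since $\tfrac2\pi>\tfrac5{36}$, this contradicts the inequality at the end of the previous paragraph, so $\zeta_1=\zeta_2$ and $f$ is injective on $\Pi$. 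Running the same argument on the slightly wider open half-strip $\{\re\zeta>\log(2R_1)-\delta,\ |\im\zeta-\theta|<\tfrac{\pi}{2(N-1)}+\delta\}$ with $\delta>0$ small — the bound on $|w|$ stays below $\tfrac2\pi$, and the displayed estimate persists for $|\im s|\leq\pi+2(N-1)\delta$ by continuity — yields univalence on a genuine neighbourhood, so $P(z)/z$ is univalent near $\mathbb{U}(2R_1,\theta,\tfrac{\pi}{N-1})$.

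I expect the spiral-integral estimate to be the real obstacle. The half-strip $\Pi$ is convex, but as $\zeta$ runs over $\Pi$ the argument of $f'(\zeta)$ sweeps an interval of length $\pi+o(1)>\pi$, so the usual half-plane (Noshiro--Warschawski) criterion does not apply on $\Pi$ itself; one must instead quantify the non-cancellation of $\int_0^1 e^{ts}\,dt$ for $|\im s|\leq\pi$, the borderline case $\im s=\pm\pi$ being extremal.
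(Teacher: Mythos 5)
Your route is genuinely different from the paper's. The paper proves both statements by Rouch\'e's theorem: it writes $P(z)/z=a_Nz^{N-1}(1+\varphi(z))$ (resp.\ $\tfrac{a_0}{z}(1+\psi(z))$) with $|\varphi|,|\psi|\le\tfrac14$ via Lemma~\ref{pp}, and compares the zero count of $P_1(z)-P_1(z_0)$ with that of $a_N(z^{N-1}-z_0^{N-1})$ (resp.\ $a_0(1/z-1/z_0)$) on the sectors $D_k$, using only the elementary estimate $|w-w_0|>\tfrac14(|w|+|w_0|)$ on $\partial\mathbb{U}(|w_0|/2,\arg w_0,\pi)$. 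The difficulty you isolate --- that $\arg f'$ sweeps an interval of length exceeding $\pi$ on the half-strip, so Noshiro--Warschawski fails there --- is entirely absorbed by that comparison. Your disk argument (Noshiro--Warschawski for $z/P(z)$ on the convex disk) checks out, and so does the reduction of the sector to the non-vanishing of $\int_0^1e^{ts}(1+w)\,dt$ with $\sup|w|<\tfrac5{36}$ and $|\im s|\le\pi$; the key inequality $\bigl|\tfrac{e^s-1}{s}\bigr|\ge\tfrac2\pi\int_0^1e^{t\re s}\,dt$ is indeed true, with equality at $s=\pm i\pi$.

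However, your proof of that key inequality does not close, and it fails precisely in the extremal regime. In the case $v^2\le3u^2$ the first factor only gives $\tfrac{u^2}{u^2+v^2}\ge\tfrac14$, and $\tfrac14<\tfrac4{\pi^2}\approx0.405$, so nothing is ``immediate''. In the case $v^2>3u^2$, the bounds $\sinh^2(u/2)\le\tfrac{4u^2}{\pi^2}$ and $\sin^2(v/2)\ge\tfrac{v^2}{\pi^2}$ give exactly $\tfrac{u^2}{u^2+v^2}\bigl(1+\tfrac{v^2}{4u^2}\bigr)=\tfrac{4u^2+v^2}{4(u^2+v^2)}$, which tends to $\tfrac14<\tfrac4{\pi^2}$ as $u/|v|\to0$ --- that is, exactly near the equality case $s=i\pi$, where one must use $\sinh^2(u/2)\approx u^2/4$ rather than the lossy $4u^2/\pi^2$. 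A workable repair: the squared inequality is equivalent to $\pi^2u^2\sin^2(v/2)\ge\bigl(4v^2-(\pi^2-4)u^2\bigr)\sinh^2(u/2)$; if the right-hand side is nonpositive you are done, and otherwise $\sin^2(v/2)\ge v^2/\pi^2$ together with $v^2\le\pi^2$ and $4\sinh^2(u/2)\ge u^2$ reduces it to $(\pi^2-4)x^2\sinh^2x\ge\pi^2(\sinh^2x-x^2)$ with $x=u/2$, i.e.\ to $\tfrac1{x^2}-\tfrac1{\sinh^2x}\le1-\tfrac4{\pi^2}$, which holds because the left-hand side is at most $\tfrac13$. With that fix (and a sentence making the compactness argument for the slightly enlarged strip precise), your proof is complete --- at the cost of considerably more delicate analysis than the paper's Rouch\'e computation.
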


\begin{proof}
(a) If $|z|\geq R_1$, by Lemma \ref{pp}(a) we have
\begin{equation*}
\left|\frac{P(z)}{z}-a_N z^{N-1}\right|\leq \frac{1}{4}\,|a_N|\,|z|^{N-1}.
\end{equation*}
Then one can write $P(z)/z$ as
\begin{equation}\label{equ-P-z}
P_1(z)=\frac{P(z)}{z}=a_N z^{N-1}(1+\varphi(z)),
\end{equation}
where $\varphi(z)$ is holomorphic in $\C\setminus\{0\}$ and $|\varphi(z)|\leq 1/4$ if $|z|\geq R_1$.

Let $w_0\in\C\setminus\{0\}$. For any $w\in\partial \mathbb{U}(|w_0|/2,\arg(w_0),\pi)$, we have
\begin{equation}\label{equ-w-w0}
|w-w_0|>\frac{1}{4}(|w|+|w_0|).
\end{equation}
Let $g(z):=z^{N-1}$. For each $z_0\in\C$ such that $|z_0|\geq 2R_1$, we define $w_0:=g(z_0)=z_0^{N-1}$. Note that $g^{-1}(\mathbb{U}(|w_0|/2,\arg(w_0),\pi))$ consists of $N-1$ disjoint closed domains:
\begin{equation*}
D_k:=\mathbb{U}\left(2^{-1/(N-1)}|z_0|,\arg(z_0)+\frac{2k\pi}{N-1},\frac{\pi}{N-1}\right),
\end{equation*}
where $0\leq k\leq N-2$. Then for $0\leq k\leq N-2$, $z_k:=z_0e^{2k\pi\ii/(N-1)}$ is contained in the interior of $D_k$.

For any $z\in \partial D_k$ with $0\leq k\leq N-2$, we have $z^{N-1}\in\partial \mathbb{U}(|w_0|/2,\arg(w_0),\pi)$. Combining \eqref{equ-P-z} and \eqref{equ-w-w0}, we have
\begin{equation*}
|z^{N-1}-z_0^{N-1}|>\frac{1}{4}(|z|^{N-1}+|z_0|^{N-1})\geq |z^{N-1}\varphi(z)-z_0^{N-1}\varphi(z_0)|.
\end{equation*}
Define $\varphi_1(z):=a_N(z^{N-1}-z_0^{N-1})$ and $\varphi_2(z):=P_1(z)-P_1(z_0)=a_Nz^{N-1}(1+\varphi(z))-a_Nz_0^{N-1}(1+\varphi(z_0))$.
By Rouch\'{e}'s theorem, $\varphi_1(z)=0$ and $\varphi_2(z)=0$ have the same number of roots in each $D_k$, where $0\leq k\leq N-2$. Since $\varphi_1(z)=0$ has exactly one root $z_k$ in each $D_k$, this means that $\varphi_2(z)=0$ has exactly one root in each $D_k$, where $0\leq k\leq N-2$.

On the other hand, \eqref{equ-w-w0} holds also for $w\in \partial \mathbb{U}(|w_0|/2,-\arg(w_0),\pi)$. By Rouch\'{e}'s theorem again, $\varphi_2(z)=0$ has no root in each $-D_k$, where $0\leq k\leq N-2$. By the arbitrariness of $z_0$, it means that $P_1(z)=P(z)/z$ is univalent in a neighborhood of $\mathbb{U}(2R_1,\theta,\tfrac{\pi}{N-1})$, where $\theta\in[0,2\pi)$.

(b) Similarly, by Lemma \ref{pp}(b) one can write $P(z)/z$ as
\begin{equation*}
P_1(z)=\frac{P(z)}{z}=\frac{a_0}{z}(1+\psi(z)),
\end{equation*}
where $\psi(z)$ is holomorphic in $\C$ and $|\psi(z)|\leq 1/4$ if $|z|\leq R_2$. For each $z_0\in\overline{\D}(0,R_2/2)\setminus\{0\}$ and  $z\in\partial \D(0,R_2)$, we have
\begin{equation*}
|z-z_0|>\frac{1}{4}(|z|+|z_0|).
\end{equation*}
Hence
\begin{equation*}
\left|\frac{1}{z}-\frac{1}{z_0}\right|>\frac{1}{4}\,\frac{|z|+|z_0|}{|z z_0|}\geq \left|\frac{\psi(z)}{z}-\frac{\psi(z_0)}{z_0}\right|.
\end{equation*}
Define $\psi_1(z):=a_0(1/z-1/z_0)$ and $\psi_2(z):=P_1(z)-P_1(z_0)=\tfrac{a_0}{z}(1+\psi(z))-\tfrac{a_0}{z_0}(1+\psi(z_0))$.
By Rouch\'{e}'s theorem, $\psi_1(z)=0$ and $\psi_2(z)=0$ have the same number of roots in $\D(0,R_2)$. Since $\psi_1(z)=0$ has exactly one root $z_0$ in $\D(0,R_2)$, this means that $\psi_2(z)=0$ has exactly one root in $\D(0,R_2)$. By the arbitrariness of $z_0$, it means that $P_1(z)=P(z)/z$ is univalent in a neighborhood of $\overline{\D}(0,R_2/2)$.
\end{proof}

Since $P$ is a polynomial, it is easy to see that $P(e^z)/e^z$ is a transcendental entire function. We now give some quantitative estimations on the mapping properties of $f(z)=P(e^z)/e^z$ by applying some properties of $P(z)/z$ obtained above. Recall that $\Lambda(x)=\{z\in\C:|\re z|>x\}$ for $x>0$. We denote
\begin{equation}\label{equ-K1}
K_0:=\min\{|a_0|,\,|a_N|\}>0.
\end{equation}

\begin{cor}\label{cor:x1r1}
Let
\begin{equation}\label{equ-x1r1}
r_0:=\frac{\pi}{N-1} \text{\quad and\quad} R_3:=\log\Big(2+\frac{8K}{K_0}\Big).
\end{equation}
Then for any square $Q\subset \Lambda(R_3)$ with sides of length $r\leq r_0$, the restriction of $f(z)=P(e^z)/e^z$ on a neighbourhood of $Q$ is a conformal map.
\end{cor}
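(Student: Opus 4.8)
\noindent\emph{Proof proposal.} The plan is to write $f=P_1\circ\exp$, where $P_1(z)=P(z)/z$ is the rational function of Lemma~\ref{lema-univalent}, and to exploit that $\exp$ is locally conformal everywhere and injective on any set whose points have imaginary parts differing by less than $2\pi$. Since $Q$ is connected and $\Lambda(R_3)=\{z:\re z>R_3\}\cup\{z:\re z<-R_3\}$ is a disjoint union of two open half-planes, $Q$ lies entirely in one of them; I would treat the two cases in parallel. In each case the key point is that $\exp(Q)$ falls into one of the two regions on which $P_1$ is univalent. Because $Q$ is compact, one can then pass to a neighbourhood $U$ of $Q$ on which $\exp$ is still injective (its imaginary range being $<2\pi$) and with $\exp(U)$ still contained in the open set where $P_1$ is univalent; then $f|_U=P_1\circ\exp|_U$ is a composition of two injective holomorphic maps with non-vanishing derivative, hence a conformal map.

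Suppose first $Q\subset\{\re z>R_3\}$. For $z\in Q$ we have $|e^z|=e^{\re z}>e^{R_3}=2+8K/K_0$, and since $K_0=\min\{|a_0|,|a_N|\}\leq|a_N|$ this gives $|e^z|>2+8K/|a_N|=2R_1$. Moreover the hypothesis $r\leq r_0=\pi/(N-1)$ forces $\{\im z:z\in Q\}$ to be an interval of length at most $\pi/(N-1)\leq\pi$; writing $\theta$ for its midpoint reduced modulo $2\pi$, every $z\in Q$ satisfies $\arg(e^z)=\im z\in[\theta-\tfrac12 r_0,\,\theta+\tfrac12 r_0]$. Hence $\exp(Q)\subset\mathbb{U}(2R_1,\theta,\tfrac{\pi}{N-1})$, on a neighbourhood of which $P_1$ is univalent by Lemma~\ref{lema-univalent}.

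Suppose instead $Q\subset\{\re z<-R_3\}$. Then for $z\in Q$,
\begin{equation*}
|e^z|=e^{\re z}<e^{-R_3}=\frac{K_0}{2K_0+8K}\leq\frac{|a_0|}{2|a_0|+8K}=\frac{R_2}{2},
\end{equation*}
the middle inequality holding because $t\mapsto t/(2t+8K)$ is increasing on $(0,\infty)$ and $K_0\leq|a_0|$. Thus $\exp(Q)\subset\overline{\D}(0,R_2/2)$, again a set on a neighbourhood of which $P_1$ is univalent by Lemma~\ref{lema-univalent}. Combining either case with the compactness argument of the first paragraph yields the conclusion.

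I expect no serious obstacle: all the substance is carried by Lemma~\ref{lema-univalent}, and what remains are the two elementary checks that the choice $R_3=\log(2+8K/K_0)$ is large enough to give $e^{R_3}\geq 2R_1$ and $e^{-R_3}\leq R_2/2$, together with the observation that the bound $r\leq r_0=\pi/(N-1)$ is exactly calibrated so that $\exp(Q)$ fits inside a sector of angular opening $\pi/(N-1)$. The one place to be a little careful is the final step: one must shrink $Q$ to an honest open neighbourhood $U$ simultaneously keeping $\exp|_U$ injective and $\exp(U)$ inside the open univalence region of $P_1$, which is where compactness of $Q$ (hence of $\exp(Q)$) is used.
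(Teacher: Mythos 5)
Your proposal is correct and follows essentially the same route as the paper: factor $f=P_1\circ\exp$, note that the side-length bound $r\leq\pi/(N-1)$ keeps $\exp$ injective near $Q$ and confines $\exp(Q)$ to either $\overline{\D}(0,R_2/2)$ or a sector $\mathbb{U}(2R_1,\theta,\tfrac{\pi}{N-1})$, and invoke Lemma~\ref{lema-univalent}. You are in fact slightly more careful than the paper, since you explicitly verify the inequalities $e^{R_3}\geq 2R_1$ and $e^{-R_3}\leq R_2/2$ that the paper leaves implicit.
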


\begin{proof}
We have $|e^z|\geq 2R_1$ if $\re z\geq\log (2R_1)$ and $|e^z|\leq R_2/2$ if $\re z\leq\log (R_2/2)$. Let $Q\subset \Lambda(R_3)$ be a square with sides of length $\pi/(N-1)$. It is easy to see that $\exp$ is injective in a neighbourhood of $Q$ and $\exp(Q)$ is contained in $\overline{\D}(0,R_2/2)$ or $\mathbb{U}(2R_1,\theta,\tfrac{\pi}{N-1})$ for some $\theta\in[0,2\pi)$. This means that $f(z)=P(e^z)/e^z$ is conformal in a neighborhood of $Q$ by Lemma \ref{lema-univalent}.
\end{proof}

We will use the following lemma to estimate $|f'(z)|$ and $|f''(z)/f'(z)|$ for $f(z)=P(e^z)/e^z$.

\begin{lema}\label{lema-est-p1}
Suppose that $|z|\geq R_4$ or $|z|\leq R_5$, where
\begin{equation*}
R_4=1+\max\Big\{\tfrac{2K+4}{|a_N|},\tfrac{K}{|a_N|}\big(\tfrac{2N^2}{N-1}+1\big)\Big\}
\text{ and }
R_5=\min\Big\{\tfrac{|a_0|}{2(KN+2)},\tfrac{1}{2N}\sqrt{\tfrac{|a_0|}{K}}\Big\}.
\end{equation*}
Then
\begin{equation*}
\left|P'(z)-\frac{P(z)}{z}\right|> 2
\text{\quad and\quad}
\left|\frac{z^{2}P''(z)}{z P'(z)-P(z)}-1 \right|<N.
\end{equation*}
\end{lema}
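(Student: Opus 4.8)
The plan is to reduce everything to estimates on the auxiliary polynomial $g(z):=zP'(z)-P(z)$. Expanding, $g(z)=-a_{0}+\sum_{k=2}^{N}(k-1)a_{k}z^{k}$, so $g$ has degree $N$, constant term $-a_{0}$ and leading coefficient $(N-1)a_{N}$; moreover $P'(z)-P(z)/z=g(z)/z$ and $\frac{z^{2}P''(z)}{zP'(z)-P(z)}=\frac{z^{2}P''(z)}{g(z)}$. I would also record $z^{2}P''(z)-g(z)=a_{0}+\sum_{k=2}^{N}(k-1)^{2}a_{k}z^{k}$, which has leading coefficient $(N-1)^{2}a_{N}$ and all coefficients bounded in modulus by $(N-1)^{2}K$. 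Thus it suffices to prove $|g(z)|>2|z|$ and $|z^{2}P''(z)-g(z)|<N|g(z)|$, the latter yielding $\big|\tfrac{z^{2}P''(z)}{g(z)}-1\big|<N$ once one knows $g$ is nonvanishing in the relevant region.

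For $|z|\geq R_{4}$ (note $R_{4}>1$), I would repeat the geometric-series estimate from the proof of Lemma \ref{pp}, applied to $g$ and to $z^{2}P''(z)-g(z)$: bounding the non-leading coefficients by $(N-1)K$, resp.\ $(N-1)^{2}K$, and summing the geometric series in $|z|$ yields, for $|z|>1$,
\begin{equation*}
|g(z)|\geq (N-1)|z|^{N}\Big(|a_{N}|-\tfrac{K}{|z|-1}\Big),\qquad |z^{2}P''(z)-g(z)|\leq (N-1)^{2}|z|^{N}\Big(|a_{N}|+\tfrac{K}{|z|-1}\Big).
\end{equation*}
The two terms in the maximum defining $R_{4}$ are tailored to these estimates: the term $\tfrac{2K+4}{|a_{N}|}$ forces $\tfrac{K}{|z|-1}\leq\tfrac{|a_{N}|}{2}$ and $|z|>\tfrac{4}{|a_{N}|}$, so $|g(z)|/|z|\geq (N-1)|z|^{N-1}\tfrac{|a_{N}|}{2}\geq\tfrac{|a_{N}|}{2}|z|>2$; the term $\tfrac{K}{|a_{N}|}\big(\tfrac{2N^{2}}{N-1}+1\big)$ forces $(2N-1)\tfrac{K}{|z|-1}<|a_{N}|$, which is exactly equivalent to $(N-1)^{2}\big(|a_{N}|+\tfrac{K}{|z|-1}\big)<N(N-1)\big(|a_{N}|-\tfrac{K}{|z|-1}\big)$, hence (combined with the two displayed bounds, which also give $g(z)\neq 0$) to $|z^{2}P''(z)-g(z)|<N|g(z)|$.

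For $0<|z|\leq R_{5}$ (note $R_{5}<\tfrac12$, since $N\geq 2$ and $|a_{0}|\leq K$), I would instead use the crude bound $\sum_{k=2}^{N}|z|^{k}\leq 2|z|^{2}$, which gives $|g(z)|\geq|a_{0}|-2NK|z|^{2}$ and $|z^{2}P''(z)|\leq 2N^{2}K|z|^{2}$. The factor $\tfrac{1}{2N}\sqrt{|a_{0}|/K}$ in the minimum defining $R_{5}$ makes $2NK|z|^{2}\leq\tfrac{|a_{0}|}{2N}\leq\tfrac{|a_{0}|}{4}$, so $|g(z)|\geq\tfrac34|a_{0}|$, whence $\big|\tfrac{z^{2}P''(z)}{g(z)}\big|\leq\tfrac23<1$ and $\big|\tfrac{z^{2}P''(z)}{g(z)}-1\big|<2\leq N$; the factor $\tfrac{|a_{0}|}{2(KN+2)}$ then yields $|g(z)|/|z|\geq\tfrac{3|a_{0}|/4}{|z|}\geq\tfrac{3(KN+2)}{2}>2$.

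The estimates are routine geometric-series bounds of the type already used for Lemma \ref{pp}, so the only real work is the bookkeeping that checks the explicit constants $R_{4}$ and $R_{5}$ are large/small enough. The one mildly non-obvious ingredient is the elementary inequality $\tfrac{2N^{2}}{N-1}+1>2N-1$ (equivalent to $N^{2}>(N-1)^{2}$), which is precisely what makes the second $|z|\geq R_{4}$ estimate hold uniformly for all $N\geq 2$.
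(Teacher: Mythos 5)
Your proposal is correct and follows essentially the same route as the paper: the same reduction to the expressions $zP'(z)-P(z)=-a_0+\sum_{k\geq2}(k-1)a_kz^k$ and $z^2P''(z)-\big(zP'(z)-P(z)\big)=a_0+\sum_{k\geq2}(k-1)^2a_kz^k$, geometric-series coefficient bounds for $|z|\geq R_4$, and the crude bound $\sum_{k\geq2}|z|^k\leq 2|z|^2$ for $|z|\leq R_5$, with the explicit radii playing exactly the roles you identify. The only (cosmetic) difference is in the second estimate for large $|z|$: you prove $|z^2P''(z)-g(z)|<N|g(z)|$ directly via $(2N-1)K/(|z|-1)<|a_N|$, while the paper first splits off $N-1$ and bounds the remaining fraction by $1/2$; both yield the stated strict inequality.
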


\begin{proof}
A direct calculation shows that
\begin{equation*}
P'(z)=\sum_{k=1}^N k a_k z^{k-1} \text{\quad and\quad} P''(z)=\sum_{k=2}^N k(k-1) a_k z^{k-2}.
\end{equation*}
This means that
\begin{equation}\label{equ-1}
P'(z)-\frac{P(z)}{z}=\sum_{k=1}^N k a_k z^{k-1}-\sum_{k=0}^N a_k z^{k-1}=\sum_{k=0}^N (k-1) a_k z^{k-1}
\end{equation}
and
\begin{equation}\label{equ-2}
\frac{z^{2}P''(z)}{z P'(z)-P(z)}-1=\frac{\sum_{k=0}^N k(k-1) a_k z^k}{\sum_{k=0}^N (k-1) a_k z^k}-1
=\frac{\sum_{k=0}^N (k-1)^2 a_k z^k}{\sum_{k=0}^N (k-1) a_k z^k}.
\end{equation}

If $|z|\geq 1+\tfrac{2K+4}{|a_N|}>3$, by \eqref{equ-1} we have
\begin{equation}\label{equ-est-f-1}
\begin{split}
\left|P'(z)-\frac{P(z)}{z}\right|
\geq~& |a_N|\,(N-1)|z|^{N-1}-K(N-1)(|z|^{N-2}+\cdots+|z|+1)\\
\geq~& (N-1)|z|^{N-1}\Big(|a_N|-\frac{K}{|z|-1}\Big)\\
\geq~& \frac{|a_N|}{2}|z|^{N-1}\geq \frac{|a_N|}{2}|z|>2.
\end{split}
\end{equation}
If $|z|\leq \tfrac{|a_0|}{2(KN+2)}<\frac{1}{2}$, we have
\begin{equation}\label{equ-est-f-2}
\begin{split}
\left|P'(z)-\frac{P(z)}{z}\right|
\geq~& \frac{|a_0|}{|z|}-K(N-1)(|z|+\cdots+|z|^{N-1})\\
\geq~& \frac{|a_0|}{|z|}-K(N-1)>\frac{|a_0|}{2|z|}\geq KN+2>2.
\end{split}
\end{equation}

For the second inequality, if $|z|\geq 1+\tfrac{K}{|a_N|}\big(\tfrac{2N^2}{N-1}+1\big)>8$, by \eqref{equ-2} we have
\begin{equation*}
\begin{split}
\left|\frac{z^{2}P''(z)}{z P'(z)-P(z)}-1\right|
\leq~& N-1+\left|\frac{\sum_{k=0}^{N-1} (k-1)(N-k) a_k z^k}{\sum_{k=0}^N (k-1) a_k z^k}\right|\\
\leq~& N-1+\frac{K N^2 }{N-1}\cdot\frac{|z|+\cdots+|z|^{N-1}}{|a_N|\,|z|^N-K(|z|+\cdots+|z|^{N-1})}\\
\leq~& N-1+\frac{K N^2 }{N-1}\cdot\frac{1}{|a_N|(|z|-1)-K}\leq N-\frac{1}{2}<N.
\end{split}
\end{equation*}
If $|z|\leq \tfrac{1}{2N}\sqrt{|a_0|/K}<\frac{1}{2}$, by \eqref{equ-2} we have
\begin{equation*}
\begin{split}
\left|\frac{z^{2}P''(z)}{z P'(z)-P(z)}-1\right|
\leq~& 1+\left|\frac{\sum_{k=2}^N k(k-1) a_k z^k}{\sum_{k=0}^N (k-1) a_k z^k}\right|\\
\leq~& 1+\frac{K N^2(|z|^2+\cdots+|z|^N)}{|a_0|-KN(|z|^2+\cdots+|z|^N)}\\
\leq~& 1+\frac{2K N^2|z|^2}{|a_0|-2KN\,|z|^2}\leq 1+\frac{N}{2N-1}\leq \frac{5}{3}< N.\qedhere
\end{split}
\end{equation*}
\end{proof}

\begin{cor}\label{cor:x2m1}
Let
\begin{equation}\label{equ-x2}
R_6:=\max\big\{\log R_4,-\log R_5\big\}.
\end{equation}
Then for any $z\in \Lambda(R_6)$, the function $f(z)=P(e^z)/e^z$ satisfies
\begin{equation*}
|f'(z)|>2 \text{\quad and \quad}  \frac{|f''(z)|}{|f'(z)|}<N.
\end{equation*}
\end{cor}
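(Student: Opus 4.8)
The plan is to reduce the statement for $f(z)=P(e^z)/e^z$ to the two inequalities for the rational function $P_1(w)=P(w)/w$ that are already packaged in Lemma \ref{lema-est-p1}, using the substitution $w=e^z$. First I would compute the derivatives of $f$ in terms of $w=e^z$. Writing $f(z)=P_1(e^z)$ with $P_1(w)=P(w)/w$, the chain rule gives $f'(z)=P_1'(e^z)\,e^z$ and $f''(z)=P_1''(e^z)\,e^{2z}+P_1'(e^z)\,e^z$, so that
\begin{equation*}
|f'(z)|=|w|\,|P_1'(w)|=\left|w P'(w)-P(w)\right|/|w| \qquad\text{and}\qquad \frac{f''(z)}{f'(z)}=1+\frac{w\,P_1''(w)}{P_1'(w)}.
\end{equation*}
A short calculation with $P_1(w)=P(w)/w$ shows $w P_1'(w)=P'(w)-P(w)/w=(wP'(w)-P(w))/w$ and, after differentiating again, $w^2 P_1''(w)+w P_1'(w)=\bigl(w^2P''(w)\bigr)/w$ suitably rearranged, so that $1+wP_1''(w)/P_1'(w)=w^2P''(w)/(wP'(w)-P(w))$. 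Hence
\begin{equation*}
|f'(z)|=\frac{\bigl|w P'(w)-P(w)\bigr|}{|w|}\qquad\text{and}\qquad \frac{f''(z)}{f'(z)}=\frac{w^2 P''(w)}{w P'(w)-P(w)},
\end{equation*}
which is exactly what makes Lemma \ref{lema-est-p1} directly applicable.

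Next I would translate the hypothesis $z\in\Lambda(R_6)$ into a condition on $w=e^z$. By definition $R_6=\max\{\log R_4,-\log R_5\}$, so $|\re z|>R_6$ means either $\re z>\log R_4$, giving $|w|=e^{\re z}>R_4$, or $\re z<-\log R_5$, giving $|w|=e^{\re z}<R_5$. In either case $w$ lies in the region $\{|w|\geq R_4\}\cup\{|w|\leq R_5\}$ where Lemma \ref{lema-est-p1} applies, so $|wP'(w)-P(w)|>2$ and $|w^2P''(w)/(wP'(w)-P(w))-1|<N$. For the nonlinearity bound this is immediate: $|f''(z)/f'(z)|=|w^2P''(w)/(wP'(w)-P(w))|<N$ by the triangle inequality applied to the second estimate (the quantity is within distance $N$ of $1$, so its modulus is less than $N+1$)---so here I would need to be slightly careful, since Lemma \ref{lema-est-p1} bounds $|X-1|<N$ which only yields $|X|<N+1$, whereas Corollary \ref{cor:x2m1} claims $|f''(z)/f'(z)|<N$. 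The likely intended reading is that $N\geq 2$, and in the proof of Lemma \ref{lema-est-p1} the actual bounds obtained are $N-\tfrac12$ and $\tfrac53$, both strictly below $N$; so I would instead invoke the sharper inequalities established inside that proof, concluding $|w^2P''(w)/(wP'(w)-P(w))|\le N-\tfrac12<N$ in the large-$|w|$ case and $\le\tfrac53<N$ in the small-$|w|$ case.

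For the lower bound on $|f'(z)|$, the cleanest route is to combine $|wP'(w)-P(w)|>2$ with an upper bound $|w|<2$ valid throughout $\Lambda(R_6)$, which would give $|f'(z)|=|wP'(w)-P(w)|/|w|>1$---but that only yields $|f'(z)|>1$, not $>2$. To recover the factor $2$ I would go back to the estimates \eqref{equ-est-f-1} and \eqref{equ-est-f-2} from the proof of Lemma \ref{lema-est-p1}: in the large-$|w|$ range one actually has $|wP'(w)-P(w)|\geq\tfrac{|a_N|}{2}|w|^N\geq\tfrac{|a_N|}{2}|w|^N/|w|^{\,}$, and dividing by $|w|$ leaves $|f'(z)|\geq\tfrac{|a_N|}{2}|w|^{N-1}\geq\tfrac{|a_N|}{2}R_4^{N-1}$, which exceeds $2$ since $R_4>3$ and $N\ge2$; in the small-$|w|$ range one has $|wP'(w)-P(w)|\geq\tfrac{|a_0|}{2|w|}$, so $|f'(z)|\geq\tfrac{|a_0|}{2|w|^2}\geq\tfrac{|a_0|}{2R_5^2}\geq 2N^2\geq 2$ by the choice $R_5\le\tfrac{1}{2N}\sqrt{|a_0|/K}$. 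The main obstacle is thus not conceptual but bookkeeping: one must extract from the \emph{proof} of Lemma \ref{lema-est-p1} the quantitative intermediate bounds (rather than just its stated conclusions) in order to upgrade $|f'(z)|>1$ to $|f'(z)|>2$ and to pin down $|f''/f'|<N$ with the strict inequality; everything else is a routine change of variables via $w=e^z$.
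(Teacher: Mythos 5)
Your overall strategy---substituting $w=e^z$, translating $z\in\Lambda(R_6)$ into $|w|\geq R_4$ or $|w|\leq R_5$, and invoking Lemma \ref{lema-est-p1}---is exactly the paper's proof. But the execution goes wrong at the chain-rule identity, and the two ``obstacles'' you then spend the rest of the argument working around are artifacts of that slip together with a misquotation of the lemma. The correct identity is $\frac{f''(z)}{f'(z)}=\frac{w^2P''(w)}{wP'(w)-P(w)}-1$, not $\frac{w^2P''(w)}{wP'(w)-P(w)}$: since $P_1'(w)=(wP'(w)-P(w))/w^2$, one finds $wP_1''(w)/P_1'(w)=\frac{w^2P''(w)}{wP'(w)-P(w)}-2$, so the ``$1+wP_1''/P_1'$'' you wrote down carries a $-1$. (Sanity check: $P(w)=w^2+1$ gives $f(z)=e^z+e^{-z}$, hence $f''/f'=(w^2+1)/(w^2-1)$, whereas $w^2P''/(wP'-P)=2w^2/(w^2-1)$.) With the correct identity, the second inequality of Lemma \ref{lema-est-p1} is \emph{verbatim} $|f''(z)/f'(z)|<N$, and no appeal to sharper constants is needed. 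Your patch is moreover itself misattributed: the quantities bounded by $N-\tfrac12$ and $\tfrac53$ inside that proof are $\bigl|\frac{w^2P''(w)}{wP'(w)-P(w)}-1\bigr|$, not $\bigl|\frac{w^2P''(w)}{wP'(w)-P(w)}\bigr|$, so the step ``concluding $|w^2P''/(wP'-P)|\le N-\tfrac12$'' is not what that proof shows. Your two errors compensate, which is why the inequality you finally assert is true, but the argument as written passes through false statements.

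The same pattern occurs for $|f'(z)|>2$: there is no upgrade to make, because the first inequality of Lemma \ref{lema-est-p1} bounds $|P'(w)-P(w)/w|$, and $f'(z)=P_1'(w)\,w=P'(w)-P(w)/w$, so the lemma literally says $|f'(z)|>2$ on $\Lambda(R_6)$. You misquoted it as $|wP'(w)-P(w)|>2$ and then re-derived bounds from \eqref{equ-est-f-1}--\eqref{equ-est-f-2}, introducing new slips: in the small-$|w|$ range the claim $|wP'(w)-P(w)|\geq\frac{|a_0|}{2|w|}$ (and hence $|f'(z)|\geq\frac{|a_0|}{2|w|^2}$) carries a spurious factor $1/|w|$ and is false as $w\to0$, and the inequality $\frac{|a_0|}{2R_5^2}\geq2N^2$ as justified needs $K\geq1$, which is not assumed; the correct chain, which is just \eqref{equ-est-f-2} itself, is $|f'(z)|\geq\frac{|a_0|}{2|w|}\geq KN+2>2$. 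Likewise, ``$\frac{|a_N|}{2}R_4^{N-1}>2$ since $R_4>3$'' is not a valid deduction when $|a_N|$ is small; the right reason, already in \eqref{equ-est-f-1}, is $\frac{|a_N|}{2}|w|\geq\frac{|a_N|}{2}+K+2>2$. So the reduction and the translation of $\Lambda(R_6)$ are right, but you need to redo the derivative computation and quote Lemma \ref{lema-est-p1} accurately; once that is done the corollary is immediate, exactly as in the paper.
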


\begin{proof}
Denote $P_1(w):=P(w)/w$. Therefore, $f(z)=P(e^z)/e^z=P_1\circ\exp(z)$. It is easy to check that
\begin{equation*}
f'(z)=P_1'(e^z)e^z \text{\quad and \quad} f''(z)=P_1''(e^z)e^{2z}+P_1'(e^z)e^z.
\end{equation*}
Let $w=e^z$. By a straightforward computation, we have
\begin{equation}\label{equ-f-w-z}
f'(z)=P_1'(w)w=P'(w)-\frac{P(w)}{w}
\end{equation}
and
\begin{equation*}
 \frac{f''(z)}{f'(z)}=\frac{P_1''(w)w^2+P_1'(w)w}{P_1'(w)w}=\frac{w^{2}P''(w)}{w P'(w)-P(w)}-1.
\end{equation*}
Then the result follows from Lemma \ref{lema-est-p1} immediately.
\end{proof}

\subsection{Escaping and fast escaping sets}

Let $f$ be a transcendental entire function. A point $a\in\mathbb{C}$ is called an \textit{asymptotic value} of $f$ if there exists a continuous curve $\gamma(t)\subset\mathbb{C}$ with $0<t<\infty$, such that $\gamma(t)\rightarrow\infty$ as $t\rightarrow \infty$ and $f(\gamma(t)) \rightarrow a$ as $t\rightarrow \infty$.

\begin{lema}\label{asym}
The entire function $f(z)=P(e^z)/e^z$ does not have any finite asymptotic value.
\end{lema}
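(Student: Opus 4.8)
\emph{Proof proposal.} The plan is to argue by contradiction. Suppose $a\in\C$ is a finite asymptotic value, so there is a curve $\gamma(t)$, $0<t<\infty$, with $\gamma(t)\to\infty$ and $f(\gamma(t))\to a$ as $t\to\infty$. Write $P_1(w):=P(w)/w$ and $w(t):=e^{\gamma(t)}$, so that $f(\gamma(t))=P_1(w(t))$. Recall that $P_1$ is a nonconstant rational function: it has a pole of order $N-1\geq 1$ at $\infty$, and, since $P(0)=a_0\neq 0$, a simple pole at $0$.

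First I would dispose of the cases in which $\re\gamma$ is unbounded. If $\limsup_{t\to\infty}\re\gamma(t)=+\infty$, choose $t_n\to\infty$ with $\re\gamma(t_n)\to+\infty$; then $|w(t_n)|=e^{\re\gamma(t_n)}\to\infty$, so by \eqref{equ-P-z} (or simply because $P_1$ has a pole at $\infty$) we get $|P_1(w(t_n))|\to\infty$, contradicting $P_1(w(t_n))\to a$. Symmetrically, if $\liminf_{t\to\infty}\re\gamma(t)=-\infty$, choose $t_n\to\infty$ with $\re\gamma(t_n)\to-\infty$; then $|w(t_n)|\to 0$, and the simple pole of $P_1$ at $0$ again forces $|P_1(w(t_n))|\to\infty$, a contradiction.

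Hence $\re\gamma$ is bounded, say $|\re\gamma(t)|\leq M$ for all $t\geq T_0$. Since $\gamma(t)\to\infty$, this forces $|\im\gamma(t)|\to\infty$, and as $\im\gamma$ is real and continuous it has eventually constant sign, so $\im\gamma(t)\to+\infty$ or $\im\gamma(t)\to-\infty$; assume the former (the other case is identical). Then $w(t)$ stays in the compact annulus $A:=\{w\in\C: e^{-M}\leq|w|\leq e^{M}\}\subset\C\setminus\{0\}$ for $t\geq T_0$. By continuity of $P_1$ on $\C\setminus\{0\}$, every subsequential limit of $w(t)$ as $t\to\infty$ lies in the fibre $Z:=\{w\in\C\setminus\{0\}:P_1(w)=a\}$, which is finite, being contained in the zero set of the polynomial $P(w)-aw$ of degree $N$. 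Since $Z$ is finite, hence discrete, and $w(\cdot)$ is continuous on $[T_0,\infty)$ with all its limit points in $Z$, a routine argument (trap $w(t)$ eventually inside a union of small pairwise disjoint discs around the points of $Z$, then use connectedness of $[T_0,\infty)$) shows $w(t)\to b$ for a single point $b\in Z$.

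To finish, observe that $\im\gamma(t)\to+\infty$ continuously, so $\im\gamma$ attains every sufficiently large real value; in particular there are $t_m\to\infty$ with $\im\gamma(t_m)=\arg(b)+\pi+2\pi m$ for all large $m$. At these times $w(t_m)=|w(t_m)|\,e^{\ii(\arg(b)+\pi)}$, and since $w(t)\to b$ forces $|w(t_m)|\to|b|$, we obtain $w(t_m)\to -b$; combined with $w(t_m)\to b$ this gives $b=-b$, i.e. $b=0$, contradicting $b\in\C\setminus\{0\}$. I expect the main obstacle to be exactly this bounded-real-part case: one must promote the convergence $f(\gamma(t))\to a$ into genuine convergence $w(t)\to b$ inside the annulus by using compactness together with the discreteness of the fibre $Z$, and then exploit that $w(t)=e^{\gamma(t)}$ must wind around $0$ infinitely often (because $\im\gamma(t)\to\infty$), which is incompatible with convergence to a nonzero limit.
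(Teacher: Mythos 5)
Your proof is correct and follows essentially the same route as the paper's: rule out unbounded real part, deduce that $e^{\gamma(t)}$ stays in a compact annulus while its argument winds infinitely often, and contradict the finiteness of the fibre $\{w: P(w)=aw\}$. The only cosmetic difference is the endgame: the paper picks a ray of argument $y_0$ avoiding the arguments of the finitely many roots of $P(w)=aw$ and extracts a subsequence along that ray, whereas you first upgrade to convergence $e^{\gamma(t)}\to b$ via the connected-limit-set argument and then use the antipodal ray; both work.
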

\begin{proof}
Assume that $a\in\C$ is a finite asymptotic value of $f(z)$. Then by definition, there exists a continuous curve $\gamma(t)\subset\mathbb{C}$ with $0<t<\infty$, such that $\gamma(t)\rightarrow\infty$ as $t\rightarrow\infty$ and $f(\gamma(t))\rightarrow a$ as $t\rightarrow\infty$. This means that
\begin{equation*}
\lim\limits_{t\rightarrow\infty}\frac{P(w)}{w}\circ e^{\gamma(t)} =a.
\end{equation*}
Denote $\gamma(t)=x(t)+\ii y(t)$ and let $w_{1}$, $w_{2}$, $\cdots$, $w_N$ be the $N$ roots of the equation $P(w)=aw$. We define the set $Y:=\{\arg w_{i}+2k\pi: 1\leq i\leq N, \,k\in\mathbb{Z}\}$. If $x(t)$ is unbounded as $t\rightarrow\infty$, then $f(\gamma(t))$ is also unbounded and this is a contradiction. Hence $|x(t)|\leq A$ for some constant $A>0$ for all $t$. Since $\gamma(t)\rightarrow\infty$ as $t\rightarrow\infty$, this implies that $y(t)\rightarrow\infty$ as $t\rightarrow\infty$. Therefore, for each $y_{0}\in\mathbb{R}\setminus Y$, there exists a sequence $\{z_{n}\}\subset\gamma(t)$ such that $\im z_{n}\rightarrow\infty$ as $n\rightarrow\infty$ and $\lim_{n\rightarrow\infty}e^{\ii\textup{Im} z_{n}}=e^{\ii y_{0}}$. Since $|x(t)|\leq A$, it follows that $\lim_{t\rightarrow\infty}e^{x(t)}\neq 0$. This implies that $\lim_{n\rightarrow\infty}f(z_n)=\lim_{n\rightarrow\infty}P(e^{z_n})/e^{z_n}\neq a$, which is a contradiction.
\end{proof}

Let $f$ be a transcendental entire function. The set
\begin{equation}\label{equ-I-f}
I(f):= \{z\in\C: f^{\circ n}(z)\rightarrow \infty \text{ as } n\to\infty\}
\end{equation}
is called the \textit{escaping set} of $f$. We use $\sing(f^{-1})$ to denote the set of \textit{singular values} of $f$ which consists of all the critical values and asymptotic values of $f$ and their accumulation points.

\begin{cor}\label{cor-I-J}
The escaping set $I(f)$ of $f(z)=P(e^z)/e^z$ is contained in the Julia set $J(f)$.
\end{cor}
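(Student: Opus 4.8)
The plan is to recognize $f(z)=P(e^z)/e^z$ as a member of the Eremenko--Lyubich class $\MB$ --- the class of transcendental entire functions whose set of singular values $\sing(f^{-1})$ is bounded --- and then to invoke the classical fact from \cite{EL92} that $I(f)\subseteq J(f)$ for every $f\in\MB$. Once membership in $\MB$ is established, the corollary is immediate.

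To check that $f\in\MB$, write $f=P_1\circ\exp$ with $P_1(w)=P(w)/w$, a rational map of degree $N\geq 2$ with a pole at $0$. Since $\exp$ has no critical points and $e^z$ never vanishes, we have $f'(z)=P_1'(e^z)\,e^z=0$ if and only if $e^z$ is one of the finitely many zeros of $P_1'$ in $\C\setminus\{0\}$; hence $f$ has only finitely many critical values, all of them finite (namely the values $P_1(w_c)$ at the critical points $w_c\in\C\setminus\{0\}$ of $P_1$). By Lemma \ref{asym}, $f$ has no finite asymptotic value. Therefore the set of critical and asymptotic values of $f$ is finite, and a finite subset of $\C$ has no accumulation points in $\C$, so $\sing(f^{-1})$ is finite and in particular bounded; thus $f\in\MB$.

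For completeness I would also recall the mechanism behind the inclusion $I(f)\subseteq J(f)$ in class $\MB$, which we otherwise use as a black box. If $F(f)=\emptyset$ there is nothing to prove, so suppose $z_0\in I(f)$ lies in a Fatou component $U$; then $f^{\circ n}\to\infty$ locally uniformly on $U$, and for all large $n$ the images $f^{\circ n}(U)$ lie in $\{|w|>R\}$, where $R>0$ is chosen so that $\sing(f^{-1})\subset\D(0,R)$. Passing to logarithmic coordinates over this region, $f$ becomes an expanding map whose linearized derivative grows with the modulus of the point; this forces the hyperbolic diameters of the iterated images inside $U$ to blow up, contradicting normality of $\{f^{\circ n}\}$ on $U$. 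This is exactly the argument of \cite{EL92}.

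I expect the only step requiring genuine verification in our setting to be the membership $f\in\MB$, and inside that, the exclusion of finite asymptotic values --- which has already been done in Lemma \ref{asym} --- since the finiteness (hence boundedness) of the critical values follows at once from the rational structure of $P_1$. This gives $I(f)\subseteq J(f)$; combined with the fact that the fast escaping set $A(f)$ is contained in $I(f)$, it also yields $A(f)\subseteq J(f)$, which is the form of the statement used later.
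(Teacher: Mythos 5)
Your proof is correct and follows essentially the same route as the paper: observe that $f$ has only finitely many critical values, invoke Lemma \ref{asym} to rule out finite asymptotic values, conclude that $\sing(f^{-1})$ is bounded, and apply \cite[Theorem 1]{EL92}. The additional sketch of the Eremenko--Lyubich expansion argument is not needed, but it does no harm.
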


\begin{proof}
It is clear that the set of the critical values of $f(z)=P(e^z)/e^z$ is finite. From Lemma \ref{asym}, it follows that $\sing(f^{-1})$ is bounded. According to \cite[Theorem 1]{EL92}, we have $I(f)\subset J(f)$.
\end{proof}

Actually, we will estimate the area of the complement of the fast escaping set in next section. Let $f$ be a transcendental entire function. The \textit{maximal modulus function} is defined by
\begin{equation*}
M(r,f):=\max_{|z|=r}|f(z)|, \text{ where } r>0.
\end{equation*}
We use $M^{\circ n}(r,f)$ to denote the $n$-th iterate of $M(r,f)$ with respect to the variable $r>0$, where $n\in\N$. The notation $M(r,f)$ is written as $M(r)$ if the function $f$ is known clearly. A subset of the escaping set, called the \textit{fast escaping set} $A(f)$ was introduced in \cite{BH99} and can be defined \cite{RS12} by
\begin{equation}\label{equ-A-f}
A(f):=\{z: \text{ there is } \ell\in\N \text{ such that } |f^{\circ(n+\ell)}(z)|\geq M^{\circ n}(R) \text{ for }n\in\N\}.
\end{equation}
Here $R>0$ is a constant such that $M^{\circ n}(R)\to\infty$ as $n\to\infty$. It is proved in \cite[Theorem 2.2(b)]{RS12} that $A(f)$ is independent of the choice of $R$ such that $M^{\circ n}(R)\to\infty$ as $n\to\infty$.

\begin{lema}\label{lema-fast}
Let $R>0$ be a constant and define $u_0:=R$. For $n\geq 1$, define $u_n$ inductively by $u_n:=R e^{Ru_{n-1}}$. Let $v_0\in\R$ and define $v_n$ inductively by $v_n:=e^{v_{n-1}}$ for $n\geq 1$. Then there is $\ell\in\N$ such that $v_{n+\ell}\geq 2R u_n$ for all $n\in\N$.
\end{lema}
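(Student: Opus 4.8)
The plan is to fix one suitable shift $\ell$ and then prove $v_{n+\ell}\ge 2Ru_n$ for all $n\in\N$ by induction on $n$. I would begin with two elementary monotonicity observations. From $e^{R^2}\ge 1$ we get $u_1=Re^{Ru_0}=Re^{R^2}\ge R=u_0$, and then induction gives $u_{n+1}=Re^{Ru_n}\ge Re^{Ru_{n-1}}=u_n$, so $(u_n)$ is non-decreasing and $u_n\ge u_0=R$ for every $n$. On the other hand, the inequality $e^x>x$ (valid for all real $x$) gives $v_{n+1}-v_n=e^{v_n}-v_n>0$, so $(v_n)$ is strictly increasing; if it were bounded it would converge to a fixed point of $x\mapsto e^x$, which does not exist, so $v_n\to\infty$. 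Hence we may fix $\ell\in\N$ with $v_\ell\ge 2R^2=2Ru_0$; this is exactly the assertion for $n=0$.

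For the inductive step, suppose $v_{n+\ell}\ge 2Ru_n$. Then
\begin{equation*}
v_{n+1+\ell}=e^{v_{n+\ell}}\ge e^{2Ru_n}=\bigl(e^{Ru_n}\bigr)^2,
\end{equation*}
so it suffices to check that $\bigl(e^{Ru_n}\bigr)^2\ge 2R^2 e^{Ru_n}=2Ru_{n+1}$, i.e. that $e^{Ru_n}\ge 2R^2$. Since $u_n\ge R$ we have $Ru_n\ge R^2$, and the elementary inequality $e^x>2x$ for $x\ge 0$ (which follows from $e^x\ge 1+x+\tfrac{x^2}{2}$ together with $1-x+\tfrac{x^2}{2}=\tfrac12(x-1)^2+\tfrac12>0$) yields $e^{Ru_n}\ge e^{R^2}>2R^2$. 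This closes the induction and hence proves the lemma.

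I do not expect a genuine difficulty in carrying this out; the only point that requires a little care is that the inductive step needs the quantitative bound $e^{R^2}>2R^2$, rather than merely $e^{R^2}>R^2$, because one must simultaneously absorb the extra factor $R$ built into the recursion $u_{n+1}=Re^{Ru_n}$ and the factor $2$ in the target $2Ru_{n+1}$. It is also worth noting that no case distinction according to whether $(u_n)$ stays bounded or escapes to infinity is necessary: the single bound $u_n\ge R$ already makes the inductive step self-sustaining for every $R>0$, and the divergence $v_n\to\infty$ is used only to get the induction started.
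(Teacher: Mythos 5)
Your proposal is correct and follows essentially the same route as the paper: pick $\ell$ with $v_\ell\geq 2R^2=2Ru_0$ (possible since iterating $x\mapsto e^x$ diverges), then induct using $u_n\geq R$ together with $e^{R^2}\geq 2R^2$, which is exactly the paper's condition $R^2\geq\log(2R^2)$. The only cosmetic difference is that the paper re-indexes so the induction starts at $v_0\geq 2R^2$, whereas you carry the shift $\ell$ explicitly, and you spell out the monotonicity and divergence facts the paper leaves implicit.
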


\begin{proof}
For any $v_0\in\R$, there exists an integer $\ell\in\N$ such that $v_\ell\geq 2R^2$. Shifting the subscript of $(v_n)_{n\in\N}$ if necessary, it is sufficient to prove that if $v_0\geq 2R^2$, then $v_n\geq 2R u_n$ for all $n\in\N$. Suppose that $v_{n-1}\geq 2Ru_{n-1}$ for some $n\geq 1$ (note that $v_0\geq 2R u_0$). We hope to obtain that $v_n\geq 2R u_n$. Note that $v_n=e^{v_{n-1}}\geq e^{2R u_{n-1}}$ and $u_n=R e^{R u_{n-1}}$. It is sufficient to obtain $R u_{n-1}\geq \log(2R^2)$. This is true since $u_{n-1}\geq R$ and $R^2\geq\log(2R^2)$ for all $R>0$.
\end{proof}

\begin{cor}\label{cor-fast-escaping}
Let $z_0\in\C$ and suppose that $z_n=f^{\circ n}(z_0)$ satisfies $|z_n|\geq \xi_n$ for all $n\in\N$, where $\xi_n>0$ is defined inductively by
\begin{equation*}
\xi_n=2\exp(\xi_{n-1}/2) \text{ with } \xi_0>0.
\end{equation*}
Then $z_0$ is contained in the fast escaping set of $f(z)=P(e^z)/e^z$.
\end{cor}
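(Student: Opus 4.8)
The plan is to apply the characterization \eqref{equ-A-f} of the fast escaping set directly: I will exhibit a constant $R>0$ with $M^{\circ n}(R,f)\to\infty$ together with an integer $\ell\in\N$ for which $|f^{\circ(n+\ell)}(z_0)|\geq M^{\circ n}(R,f)$ holds for every $n\in\N$. Since $|f^{\circ(n+\ell)}(z_0)|=|z_{n+\ell}|\geq\xi_{n+\ell}$ by hypothesis, it suffices to arrange $\xi_{n+\ell}\geq M^{\circ n}(R,f)$ for all $n$, and the two nested exponential recursions appearing in Lemma \ref{lema-fast} are exactly what is needed to produce such an $\ell$.

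First I would bound the maximal modulus of $f(z)=P(e^z)/e^z$. Writing $w=e^z$ with $|\re z|\leq|z|=r$, so that $e^{-r}\leq|w|\leq e^{r}$, one has
\begin{equation*}
\left|\frac{P(w)}{w}\right|\leq K\Big(|w|^{-1}+1+|w|+\cdots+|w|^{N-1}\Big)\leq K(N+1)\,e^{(N-1)r}
\end{equation*}
for all $r\geq 0$, treating the cases $|w|\geq 1$ and $|w|<1$ separately (the latter absorbs the pole of $P(w)/w$ at the origin, i.e.\ the behaviour as $\re z\to-\infty$). Hence $M(r,f)\leq K(N+1)\,e^{(N-1)r}$ for $r\geq 0$. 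I then choose $R>0$ large enough that $R\geq K(N+1)$, $R\geq N-1$, and $M^{\circ n}(R,f)\to\infty$ (the last holds for every sufficiently large $R$ since $f$ is transcendental entire). For such an $R$ one gets $M(r,f)\leq R\,e^{Rr}$ for all $r\geq 0$.

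Next I would feed this estimate into Lemma \ref{lema-fast}. Put $u_0:=R$ and $u_n:=R\,e^{R u_{n-1}}$ for $n\geq 1$. Since $M(\cdot,f)$ is increasing and $u_n\geq R\geq 0$, an immediate induction gives $M^{\circ n}(R,f)\leq u_n$ for all $n\in\N$. On the other hand, setting $v_n:=\xi_n/2$ converts the defining recursion $\xi_n=2\exp(\xi_{n-1}/2)$ into $v_n=e^{v_{n-1}}$ with $v_0=\xi_0/2\in\R$. Lemma \ref{lema-fast} then supplies an $\ell\in\N$ with $v_{n+\ell}\geq 2R\,u_n$ for all $n\in\N$, whence
\begin{equation*}
\xi_{n+\ell}=2v_{n+\ell}\geq 4R\,u_n\geq u_n\geq M^{\circ n}(R,f),\qquad n\in\N.
\end{equation*}
Combining this with $|z_{n+\ell}|\geq\xi_{n+\ell}$ yields $|f^{\circ(n+\ell)}(z_0)|\geq M^{\circ n}(R,f)$ for every $n$, so $z_0\in A(f)$ by \eqref{equ-A-f}.

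The argument is essentially routine once the ingredients are assembled; the only step needing a little care is the uniform bound $M(r,f)\leq R\,e^{Rr}$, where one must control $P(w)/w$ near $w=0$ as well as for large $|w|$, and at the same time fix a single $R$ that also guarantees $M^{\circ n}(R,f)\to\infty$ so that the characterization \eqref{equ-A-f} is applicable. Everything else is bookkeeping with the two nested exponential recursions, and that is handled wholesale by Lemma \ref{lema-fast}.
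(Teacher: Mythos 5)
Your proposal is correct and follows essentially the same route as the paper: bound the maximal modulus by $M(r,f)\leq R\,e^{Rr}$ for a suitable large $R$ with $M^{\circ n}(R)\to\infty$, dominate $M^{\circ n}(R)$ by the recursion $u_n=R\,e^{Ru_{n-1}}$, and apply Lemma \ref{lema-fast} with $v_n=\xi_n/2$ to get $\xi_{n+\ell}\geq M^{\circ n}(R)$ and hence $z_0\in A(f)$ by \eqref{equ-A-f}. The only differences are cosmetic (a direct estimate of $M(r,f)$ valid for all $r\geq 0$ instead of invoking Lemma \ref{pp} with a threshold, and a standard argument for $M^{\circ n}(R)\to\infty$), so no further comment is needed.
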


\begin{proof}
Recall that $N\geq 2$ is the degree of the polynomial $P$ and $K>0$ is defined in \eqref{equ-K}. According to Lemma \ref{pp}, there exists $\delta_0\geq 1$ such that if $\delta\geq\delta_0$, then the maximal modulus function of $f$ satisfies
\begin{equation*}
M(\delta)=M(\delta,f)\leq 2K e^{(N-1)\delta}.
\end{equation*}
On the other hand, there exists $\delta_1>0$ such that for all $\delta\geq\delta_1$, then $M^{\circ n}(\delta)$ is monotonically increasing as $n$ increases. Since the Julia set of $f$ is non-empty, this means that $M^{\circ n}(\delta)\to\infty$ as $n\to\infty$ if $\delta\geq\delta_1$.

Define
\begin{equation*}
R:=\max\{2K,(N-1)\delta_0,\delta_1\}\geq 1.
\end{equation*}
We denote $u_0=R$ and for $n\geq 1$, define $u_n$ inductively by $u_n=R e^{Ru_{n-1}}$. Then we have $M^{\circ n}(R)\leq u_n$ for all $n\in\N$. By the definition of $\xi_n$, we have $\xi_n=2\exp^{\circ n}(\xi_0/2)$. Let $v_0:=\xi_0/2$ and define $v_n:=e^{v_{n-1}}$ for $n\geq 1$. According to Lemma \ref{lema-fast}, there exists $\ell\in\N$ such that for all $n\in\N$,
\begin{equation*}
|f^{\circ (n+\ell)}(z_0)|=|z_{n+\ell}|\geq \xi_{n+\ell}=2v_{n+\ell}\geq 4R u_n\geq u_n\geq M^{\circ n}(R).
\end{equation*}
By the defintion of $R$, we have $M^{\circ n}(R)\to\infty$ as $n\to\infty$. This means that $z_0$ is contained in the fast escaping set of $f$.
\end{proof}

\section{Proof of the theorems}

\subsection{Proof of Theorem \ref{thm}}

Recall that $N\geq 2$ is the degree of the polynomial $P$. Let $r>0$ be fixed such that
\begin{equation}\label{equ-r-x}
r\leq\frac{1}{4N}.
\end{equation}
We define
\begin{equation}\label{equ-r-x-1}
x':=\max\{R_3, R_6, 6\log 2\},
\end{equation}
where $R_3$ and $R_6$ are constants introduced in Corollary \ref{cor:x1r1} and Corollary \ref{cor:x2m1} respectively.

Recall that $\Lambda(x)=\{z\in\C:|\re z|>x\}$ is the set defined in \eqref{equ-Rx} for all $x>0$. Let $Q_{0}$ be a square in $\Lambda(x)$ with sides of length $r$, where $x\geq x'$. Since $r<r_0=\pi/(N-1)$, from Corollary \ref{cor:x1r1} we know that $f$ is conformal in a neighbourhood of $Q_{0}$. For $k\in\N$, define
\begin{equation}\label{equ-x-k}
x_{k}:=2\exp^{\circ k}(x/2).
\end{equation}
In particular, $x_0=x\geq x'$ and we have $x_{k+1}=2\exp(x_k/2)>x_k\geq x'$ since $2e^{x/2}>x$ for all $x\in\R$.
Recall that $\MQ_r$ is a collection of grids with sides of length $r>0$ defined in \eqref{equ-Q-r}. For any subset $E$ of $Q_{0}$ in $\Lambda(x_{0})$ and $k\in\N$, define
\begin{equation*}
\text{pack}(f^{\circ k}(E)):=\{Q_{r}\in \MQ_r: Q_{r}\subset f^{\circ k}(E)\cap \Lambda(x_{k})\}.
\end{equation*}

We now define a sequence of families of measurable sets satisfying the nesting conditions based on the square $Q_0$. Let $\ME_0:=\{Q_{0}\}$ and for $k\geq 1$, define inductively
\begin{equation*}
\ME_k:=\{F_{k}\subset Q_{0}: F_{k}\subset E_{k-1}\in \ME_{k-1} \text{ and }  f^{\circ k}(F_{k})\in \text{pack}(f^{\circ k}(E_{k-1})) \}.
\end{equation*}
It is clear that $\ME_k$ is a finite collection of measurable subsets of $\mathbb{C}$ for all $k\in \mathbb{N}$. Denote the elements of $\ME_k$ by $E_{k,i}$, where $1\leq i\leq d_k$.

By definition, for all $k\in \mathbb{N},$ we have $f^{\circ(k+1)}(E_{k,i})=f( Q_{r}^k)$, where\footnote{Note that $Q_r^k\subset \Lambda(x_k)$ is a square depending also on the subscript `$i$' of $E_{k,i}$, where $k\in\N$ and $1\leq i\leq d_k$. We omit this index here for simplicity.} $Q_r^k$ is a square with sides of length $r$ and $Q_r^k\subset \Lambda(x_{k})$. From \eqref{equ-nonlinearity}, Corollary \ref{cor:x2m1} and \eqref{equ-r-x}, we have
\begin{equation*}
N(f|_{Q_r^k})< N\sqrt{2}r\leq \frac{\sqrt{2}}{4}.
\end{equation*}
By Lemma \ref{LN}, the distortion of $f$ on $Q_r^k$ satisfies
\begin{equation}\label{L2}
L(f|_{Q_r^k})\leq 1+2 N(f|_{Q_r^k})<2.
\end{equation}
For every $k\in\N$, let $z_k$ be any point in $Q_r^k\subset \Lambda(x_{k})$. From \eqref{Lf} and \eqref{L2} we have
\begin{equation}\label{mfQ}
\begin{split}
\Area(f(Q_{r}^{k}))
=~& \int_{Q_r^k}|f'(z)|^{2}dxdy \geq\inf\limits_{z\in Q_r^k}|f'(z)|^{2}\cdot\Area(Q_r^k)\\
\geq~& \frac{|f'(z_k)|^{2}}{(L(f|_{ Q_r^k}))^{2}}\cdot r^{2}>\frac{1}{4}|f'(z_k)|^{2}r^{2}
\end{split}
\end{equation}
and
\begin{equation}\label{mfQ-1}
\begin{split}
\text{diam}(f( Q_r^k))
&~\leq \sup\limits_{z\in Q_r^k}|f'(z)|\cdot\text{diam}(Q_r^k)\\
&~\leq L(f|_{ Q_r^k})|f'(z_k)|\cdot\sqrt{2}r<2\sqrt{2}|f'(z_k)|r.
\end{split}
\end{equation}
Recall that $K_0=\min\{|a_0|,\,|a_N|\}>0$ is the constant defined in \eqref{equ-K1}. By \eqref{equ-est-f-1}, \eqref{equ-est-f-2} and \eqref{equ-f-w-z}, we have
\begin{equation}\label{equ-f-deri}
|f'(z_k)|>\frac{1}{2}\,K_0 e^{|\re z_k|}>\frac{1}{2}\,K_0e^{x_k}.
\end{equation}

For $k\in \mathbb{N}$ and $1\leq i\leq d_k$, we denote
\begin{equation*}
B_1:=\cup \{Q_{r}\in \MQ_r:  Q_r\subset  f^{\circ(k+1)}(E_{k,i})\cap (\C\setminus \Lambda(x_{k+1}))\}
\end{equation*}
and
\begin{equation*}
B_2:=\cup\{Q_{r}\in \MQ_r: Q_{r}\cap(\partial f^{\circ(k+1)}(E_{k,i})\cup(\partial \Lambda(x_{k+1})\cap f^{\circ(k+1)}(E_{k,i})))\neq\emptyset\}.
\end{equation*}
Recall that $f^{\circ(k+1)}(E_{k,i})=f(Q_{r}^k)$ for some square $Q_r^k$ in $\Lambda(x_{k})$ with sides of length $r$, where $k\in \mathbb{N}$ and $1\leq i\leq d_k$.
From \eqref{mfQ}, \eqref{mfQ-1} and \eqref{equ-f-deri}, we have
\begin{equation}\label{one}
\begin{split}
 &~\frac{\Area\left(B_1\right)}{\Area(f^{\circ(k+1)}(E_{k,i}))} \leq\frac{2x_{k+1}\text{diam}(f^{\circ(k+1)}(E_{k,i}))}{\Area(f^{\circ{(k+1)}}(E_{k,i}))}\\
=&~ \frac{2x_{k+1}\text{diam}( f(Q_r^k))}{\Area(f( Q_r^k))}
< \frac{16\sqrt{2}x_{k+1}}{|f'(z_k)|\,r}<\frac{32\sqrt{2}}{K_0 r}\cdot\frac{x_{k+1}}{e^{x_{k}}}.
\end{split}
\end{equation}
Note that $x_{k+1}\geq x_1=2e^{x/2}$ for all $k\in\N$ and $x\geq 6\log 2$ by \eqref{equ-r-x-1}. By Lemma \ref{mQ}, \eqref{L2}, \eqref{mfQ} and \eqref{equ-f-deri}, we have
\begin{equation}\label{two}
\begin{split}
&~\frac{\Area(B_2)}{\Area(f^{k+1}(E_{k,i}))} \leq\frac{\left(16+12\sqrt{2}L(f|_{Q_r^k})|f'(z_k)|\right)r^{2}}{\Area(f(Q_r^k))}\\
<&~\frac{32(2+3\sqrt{2}|f'(z_k)|)}{|f'(z_k)|^{2}}
<\frac{256}{K_0^2 e^{2x_k}}+\frac{192\sqrt{2}}{K_0 e^{x_k}}\\
\leq &~\Big(\frac{128}{K_0^2}\cdot\frac{1}{e^{3x/2}}+\frac{96\sqrt{2}}{K_0}\cdot\frac{1}{e^{x/2}}\Big)\cdot\frac{x_{k+1}}{e^{x_{k}}}
\leq\Big(\frac{1}{4K_0^2}+\frac{12\sqrt{2}}{K_0}\Big)\cdot\frac{x_{k+1}}{e^{x_{k}}}.
\end{split}
\end{equation}
For all $k\in \mathbb{N}$ and $1\leq i\leq d_k$, by \eqref{one} and \eqref{two}, we have
\begin{equation}\label{three}
\begin{split}
    &~\dens\left(\bigcup \text{pack}(f^{\circ (k+1)}(E_{k,i})), f^{\circ (k+1)}(E_{k,i})\right)\\
\geq&~\frac{\Area\left(\bigcup \{Q_{r}\in \MQ_r:  Q_{r}\cap f^{\circ (k+1)}(E_{k,i})\neq\emptyset\right)}{\Area(f^{\circ (k+1)}(E_{k,i}))}
   -\frac{\Area(B_1)+\Area(B_2)}{\Area(f^{\circ (k+1)}(E_{k,i}))} \\
>&~1-c_0\,\frac{x_{k+1}}{e^{x_{k}}}\geq 1-c_1\,\frac{x_{k+1}}{e^{x_{k}}},
\end{split}
\end{equation}
where
\begin{equation}\label{c2}
c_1\geq c_0:=\frac{32\sqrt{2}}{K_0 r}+\frac{1}{4K_0^2}+\frac{12\sqrt{2}}{K_0}.
\end{equation}
Comparing \eqref{equ-r-x-1}, we assume that $x^*>0$ is a fixed constant such that
\begin{equation}\label{equ-r-x-1-new}
x^{*}\geq\max\{R_3, R_6, 6\log 2,12+2\log c_1\}.
\end{equation}
Moreover, we suppose that the sequence $\{x_k\}_{k\in\N}$ in \eqref{equ-x-k} is chosen such that the initial point satisfies $x_0= x\geq x^*$. Then, all the statements above are still true since $x^*\geq x'$.

By a straightforward induction, one can show that for all $k\in\N$ and $x\in\R$,
\begin{equation*}
\exp^{\circ (k+1)}(x)\geq \exp(k)\exp(x).
\end{equation*}
Since $x_{k+1}=2e^{x_k/2}$, we have
\begin{equation}\label{equ-x-k-1}
\frac{x_{k+1}}{e^{x_k}}=\frac{2}{e^{x_k/2}}=\frac{2}{\exp^{\circ(k+1)}(x/2)}\leq \frac{2}{e^k}\cdot\frac{1}{e^{x/2}}.
\end{equation}
On the other hand, by \eqref{equ-r-x-1-new}, we have $e^{x/2}\geq c_1 e^6>6c_1 e^4$ since $x\geq x_*$. Therefore,
\begin{equation}\label{equ-c-1-exp}
c_1 e^4 \,\frac{x_{k+1}}{e^{x_{k}}}\leq c_1 e^4\cdot\frac{2}{e^k}\cdot\frac{1}{e^{x/2}}\leq c_1 e^4\cdot\frac{2}{e^{x/2}}<\frac{1}{3}.
\end{equation}

Define $V:=f(Q_r^k)$ and let $G:=f^{-(k+1)}: V\to Q_{0}$ be the inverse of $f^{\circ (k+1)}|_{E_{k,i}}$, where $k\in\N$ and $1\leq i\leq d_k$. By Lemma \ref{distortion}, Corollary \ref{cor:x2m1} and \eqref{equ-r-x}, the distortion of $G$ on $V$ satisfies
\begin{equation}\label{G}
 L(G|_V)<\exp(\tfrac{2}{2-1})=e^2.
\end{equation}
From \eqref{density} and \eqref{G}, we have
\begin{equation*}
\begin{split}
  &~\dens\left(\ME_{k+1}, E_{k,i}\right)= 1-\dens\left(E_{k,i}\setminus \ME_{k+1}, E_{k,i}\right) \\
=&~ 1-\dens\Big(G\big(f^{\circ (k+1)}(E_{k,i}\setminus \ME_{k+1})\big), G\big(f^{\circ (k+1)}(E_{k,i})\big)\Big)\\
\geq&~ 1-L(G|_V)^{2}\,\dens\left(f^{\circ (k+1)}(E_{k,i})\setminus \bigcup\text{pack}(f^{\circ (k+1)}(E_{k,i}), f^{\circ (k+1)}(E_{k,i})\right)\\
\geq&~ 1-e^4\left(1-\dens\big(\bigcup \text{pack}(f^{\circ (k+1)}(E_{k,i})), f^{\circ (k+1)}(E_{k,i})\big)\right).
\end{split}
\end{equation*}
Therefore, by \eqref{three} and \eqref{equ-c-1-exp}, we have
\begin{equation}
\dens(\ME_{k+1}, E_{k,i})\geq 1-c_1 e^4 \,\frac{x_{k+1}}{e^{x_{k}}}\geq\frac{2}{3},
\end{equation}
where $k\in\N$ and $1\leq i\leq d_k$. For all $k\in\N$, by setting
\begin{equation}\label{equ-rho-k}
\rho_{k}:=1-c_1 e^4 \,\frac{x_{k+1}}{e^{x_{k}}},
\end{equation}
it is easy to see that $\{\ME_k\}_{k=0}^{\infty}$ satisfies the nesting conditions.

Define $E=\cap_{k=0}^{\infty}\ME_k$. Recall that $A(f)$ is the fast escaping set of $f$ defined in \eqref{equ-A-f}. Since every point $z\in E_{k,i}$ satisfies $f^{\circ j}(z)\in \Lambda(x_j)$ for $0\leq j\leq k$ and $x_k\to+\infty$ as $k\to\infty$, it means that $E$ is contained in the fast escaping set $A(f)$ by \eqref{equ-x-k} and Corollary \ref{cor-fast-escaping}. According to Lemma \ref{nest}, we have
\begin{equation*}
\dens(A(f), Q_{0})\geq \dens(E, Q_{0})\geq\prod\limits_{k=0}^{\infty}\rho_{k}.
\end{equation*}
Note that $\log(1-t)>-2t$ for $t\in(0,1/2)$. By \eqref{equ-c-1-exp} and \eqref{equ-rho-k} we have
\begin{equation*}
\begin{split}
\log \Big(\prod_{k=0}^{\infty}\rho_{k}\Big)
=&~\sum_{k=0}^\infty\log\Big(1-c_1 e^4 \,\frac{x_{k+1}}{e^{x_{k}}}\Big)\geq -2\sum_{k=0}^\infty c_1 e^4 \,\frac{x_{k+1}}{e^{x_{k}}} \\
\geq &~ -\frac{4 c_1 e^4}{e^{x/2}}\sum_{k=0}^\infty \frac{1}{e^k}>-\frac{8 c_1 e^4}{e^{x/2}}.
\end{split}
\end{equation*}
Since $e^{-t}\geq 1-t$ for all $t\in\R$, we have
\begin{equation}\label{use}
\dens(A(f), Q_{0})>\exp{\left(-\frac{8 c_1 e^4 }{e^{x/2}}\right)}\geq 1-\frac{8 c_1 e^4 }{e^{x/2}}
\end{equation}
for all $x\geq x^{*}$ and all square $Q_{0}\subset \Lambda(x)$ with sides of length $r$.

\begin{thm}\label{thm-main-restate}
Let $S$ be any horizontal strip of width $2\pi$. Then the area of the complement of the fast escaping set of $f(z)=P(e^z)/e^z$ satisfies
\begin{equation}\label{equ-area-bound}
\Area(S\cap A(f)^c)\leq (4\pi+4r)\left(x^{*}+r+8c_1\, e^{4-x^*/2}\frac{r}{1-e^{-r/2}}\right)<\infty,
\end{equation}
where $r$, $c_1$ and $x^*$ are any positive constants satisfying \eqref{equ-r-x}, \eqref{c2} and \eqref{equ-r-x-1-new} respectively.
\end{thm}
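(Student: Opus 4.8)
The plan is to deduce Theorem~\ref{thm-main-restate} from the density estimate \eqref{use} by a covering argument. The estimate \eqref{use} says that inside the region $\Lambda(x)$ (the set where $|\re z|>x$), every grid square of side $r$ carries fast escaping points with density at least $1-8c_1e^4/e^{x/2}$; equivalently, the complement $A(f)^c$ has density at most $8c_1e^4/e^{x/2}$ in each such square. Since $f(z)=P(e^z)/e^z$ has period $2\pi\ii$, it suffices to bound $\Area(S\cap A(f)^c)$ for a single horizontal strip $S=\{z:y_0\le\im z\le y_0+2\pi\}$; by translating we may as well take $S$ aligned so that the grid $\MQ_r$ restricted to $S$ consists of finitely many columns of squares (roughly $2\pi/r$ of them, up to boundary effects, hence the factor $4\pi+4r$ accounting for the extra border squares).

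First I would split $S$ into the ``central'' part $S\cap\{|\re z|\le x^*\}$ and the two ``tails'' $S\cap\Lambda(x^*)$. The central part is a bounded rectangle of width at most $2x^*$ (plus a margin of at most $r$ on each side to respect the grid), so its total area is at most $(2\pi+2r)(2x^*+2r)$, which gives the $(4\pi+4r)(x^*+r)$ contribution; here one simply bounds $\Area(S\cap A(f)^c\cap\{|\re z|\le x^*\})$ by the full area of that rectangle. For the tails, I would decompose $S\cap\Lambda(x^*)$ into vertical strips $S\cap\{x^*+jr\le|\re z|< x^*+(j+1)r\}$ for $j=0,1,2,\dots$, each of which is covered by at most $(2\pi/r)+2$ grid squares of $\MQ_r$, all contained in $\Lambda(x^*+jr)\supset$ — more precisely each such square lies in $\Lambda(x)$ for $x=x^*+jr$ wait, one must be slightly careful: a square meeting $\{|\re z|\ge x^*+jr\}$ lies in $\Lambda(x^*+(j-1)r)$ or so, but choosing the indexing and the constant correctly only affects the geometric series mildly.

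On each grid square $Q_0$ in the $j$-th tail strip, \eqref{use} yields $\Area(Q_0\cap A(f)^c)\le r^2\cdot 8c_1e^4 e^{-(x^*+jr)/2}$ (up to the harmless shift in the exponent). Summing over the at most $(2\pi/r)+2\le(2\pi+2r)/r$ squares in one strip and then over $j\ge 0$ gives
\begin{equation*}
\Area\big(S\cap A(f)^c\cap\Lambda(x^*)\big)\le \frac{2\pi+2r}{r}\cdot r^2\cdot 8c_1e^4\, e^{-x^*/2}\sum_{j=0}^{\infty}e^{-jr/2}
=(2\pi+2r)\cdot 8c_1 e^{4-x^*/2}\,\frac{r}{1-e^{-r/2}},
\end{equation*}
using the geometric series $\sum_{j\ge0}e^{-jr/2}=1/(1-e^{-r/2})$. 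Doubling for the two tails (left and right of the imaginary axis) and adding the central contribution produces exactly the claimed bound $(4\pi+4r)\big(x^*+r+8c_1e^{4-x^*/2}\frac{r}{1-e^{-r/2}}\big)$; finiteness is immediate since each factor is finite.

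The main obstacle — really the only delicate point — is bookkeeping the grid: a square of $\MQ_r$ that intersects a given vertical strip need not be contained in that strip nor in the corresponding $\Lambda(x)$, so one must either absorb the one-square overhang into the constants (which is what the $+r$'s and the factor $4\pi+4r$ rather than $4\pi$ are doing) or shift the indices in the exponent by one step of $r$, costing at most a factor $e^{r/2}\le e^{1/8}$ that can again be hidden. I would set up the indexing once, carefully, so that every square counted in the $j$-th batch genuinely lies in $\Lambda(x)$ for the value of $x$ whose estimate I invoke, and check that the resulting sum telescopes into the stated geometric series. Everything else is the elementary area comparison above.
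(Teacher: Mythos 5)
Your proposal is correct and follows essentially the same route as the paper: reduce to one strip by the periodicity $f(z)=f(z+2\pi\ii)$, cover the central rectangle $\{|\re z|\le x^*\}$ trivially by its full area, and handle the two tails by summing the density estimate \eqref{use} over grid squares of $\MQ_r$ column by column, yielding the geometric series $\sum_j e^{-jr/2}$. The indexing issue you flag is resolved in the paper exactly as you suggest, by starting the tail count at $m_0=[x^*/r]+1$ so that every square used genuinely lies in $\Lambda(x^*)$.
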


\begin{proof}
Define the half strip $S_+$ by
\begin{equation*}
S_+:=\{z\in\mathbb{C}: 0\leq \im z\leq2\pi \text{ and } \re z\geq 0\}.
\end{equation*}
We take
\begin{equation}\label{equ-m-0-n-0}
m_0=[x^{*}/r]+1 \text{ and } n_0=[2\pi/r]+1,
\end{equation}
where $[x]$ denotes the integer part of $x\geq 0$. Recall that $Q_r^{m,n}$ is defined as
\begin{equation*}
Q_r^{m,n}:=\{z\in\C: mr\leq \re z\leq (m+1)r \hspace{0.2cm}\text{and}\hspace{0.2cm}  nr\leq \im z \leq (n+1)r \},
\end{equation*}
where $m,n\in\Z$. Since $Q_r^{m,n}\subset \Lambda(x^{*})$ for all $m\geq m_{0}$, we get
\begin{equation}\label{3.11}
\dens(A(f), Q_r^{m,n})>1-\frac{8 c_1 e^4 }{\exp(mr/2)}
\end{equation}
for all $m\geq m_{0}$ by \eqref{use}. So
\begin{align*}
\Area( S_+\cap A(f)^c)&\leq \Area\left(\left(\mathop\cup\limits_{m=0}^{\infty} \mathop\cup\limits_{n=0}^{n_{0}}Q_r^{m,n}\right)\setminus A(f)\right)\\
               &\leq\sum\limits_{m=0}^{\infty}\sum\limits_{n=0}^{n_{0}}\Area(Q_r^{m,n}\setminus A(f))\\
               &\leq\sum\limits_{m=0}^{\infty}\sum\limits_{n=0}^{n_{0}}(1-\dens(A(f), Q_r^{m,n}))\cdot\Area(Q_r^{m,n}).
\end{align*}
By \eqref{equ-m-0-n-0} and \eqref{3.11}, we obtain
\begin{equation*}
\begin{split}
\Area( S_+\cap A(f)^c)
\leq &~ r^2\Big(\sum_{m=0}^{m_0-1}\sum_{n=0}^{n_{0}}1+\sum_{m=m_0}^{\infty}\sum_{n=0}^{n_{0}}\frac{8 c_1 e^4 }{\exp(mr/2)}\Big) \\
\leq &~(2\pi+2r)\left(x^{*}+r+8c_1\, e^{4-x^*/2}\frac{r}{1-e^{-r/2}}\right).
\end{split}
\end{equation*}
This means that $\Area( S_+\cap A(f)^c)<\infty$ for every fixed $r>0$ satisfying \eqref{equ-r-x}.
Similarly, one can obtain
\begin{equation*}
\Area( S_-\cap A(f)^c)\leq (2\pi+2r)\left(x^{*}+r+8c_1\, e^{4-x^*/2}\frac{r}{1-e^{-r/2}}\right),
\end{equation*}
where $S_-=\{z\in\mathbb{C}: 0\leq \im z\leq2\pi \text{ and } \re z\leq 0\}$. Since $f(z)=f(z+2\pi\ii)$, for any horizontal strip $S$ of width $2\pi$, we have
\begin{equation*}
\Area(S\cap A(f)^c)\leq (4\pi+4r)\left(x^{*}+r+8c_1\, e^{4-x^*/2}\frac{r}{1-e^{-r/2}}\right).
\end{equation*}
This completes the proof of Theorem \ref{thm-main-restate} and hence Theorem \ref{thm}.
\end{proof}

\subsection{Proof of Theorem \ref{thm-sin}}

Consider the quadratic polynomial
\begin{equation*}
P(z)=\frac{\alpha}{2} z^2+\ii\beta z-\frac{\alpha}{2}, \text{ where } \alpha\neq 0 \text{ and }\beta\in\C.
\end{equation*}
We then have
\begin{equation*}
f(z):=\frac{P(e^z)}{e^z}=\frac{\alpha}{2} e^z+\ii\beta-\frac{\alpha}{2} e^{-z}.
\end{equation*}
Note that $\alpha\sin(z+\beta)$ is conjugated by $z\mapsto \ii(z+\beta)$ to $f(z)$,
In order to prove Theorem \ref{thm-sin}, it is sufficient to prove the corresponding statements on $f$.

Now we collect all the needing constants in the proof. Note that the degree of $P$ is $\deg(P)=N=2$. By \eqref{equ-r-x} we fix the choice of $r>0$ by setting
\begin{equation*}
r=1/8.
\end{equation*}
By \eqref{equ-K1}, we have $K_0=|\alpha|/2$. From \eqref{c2}, we fix
\begin{equation*}
c_1=c_0=\frac{536\sqrt{2}}{|\alpha|}+\frac{1}{|\alpha|^2}.
\end{equation*}
By \eqref{equ-x1r1}, we have
\begin{equation*}
R_3=\log\Big(2+\frac{16K}{|\alpha|}\Big), \text{ where } K=\max\{|\alpha|/2,|\beta|\}.
\end{equation*}
According to Lemma \ref{lema-est-p1}, we have
\begin{equation*}
R_4=\max\Big\{1+\frac{4(K+2)}{|\alpha|},1+\frac{18K}{|\alpha|}\Big\} \text{\quad and\quad}
R_5=\min\Big\{\frac{|\alpha|}{8(K+1)},\frac{1}{4}\sqrt{\frac{|\alpha|}{2K}}\Big\}.
\end{equation*}

Since $K\geq |\alpha|/2>0$, we have
\begin{equation*}
\frac{8(K+1)}{|\alpha|}> \frac{8K}{|\alpha|}\geq 4\sqrt{\frac{2K}{|\alpha|}}, \quad
\frac{8(K+1)}{|\alpha|}=\frac{4K}{|\alpha|}+\frac{4(K+2)}{|\alpha|}>1+\frac{4(K+2)}{|\alpha|}
\end{equation*}
and
\begin{equation*}
1+\frac{18K}{|\alpha|}= 1+\frac{16K}{|\alpha|}+\frac{K}{|\alpha|/2}\geq 2+\frac{16K}{|\alpha|}.
\end{equation*}
Hence by \eqref{equ-r-x-1-new}, we can fix
\begin{equation*}
x^*=\max\Big\{\log\Big(1+\frac{18K}{|\alpha|}\Big),\log\Big(\frac{8(K+1)}{|\alpha|}\Big), 6\log 2,12+2\log c_1\Big\}.
\end{equation*}
By Theorem \ref{thm-main-restate}, the proof of Theorem \ref{thm-sin} is finished module the statement on the sine and cosine functions.

\vskip0.2cm

Let $S$ be a vertical strip with width $2\pi$. If $\alpha=1$ and $\beta=0$, then $K=1/2$ and
\begin{equation}\label{equ-cst-rcx}
r=1/8, \quad c_1=536\sqrt{2}+1 \text{\quad and\quad} x^*=12+2\log\big(536\sqrt{2}+1\big).
\end{equation}
From \eqref{equ-area-bound} we have
\begin{equation*}
\begin{split}
 &~\Area(S\cap A(\sin z)^c) \\
\leq&~\Big(4\pi+\frac{1}{2}\Big)\Big(\frac{97}{8}+2\log(536\sqrt{2}+1)+\frac{1}{e^2-e^{31/16}}\Big)<361.
\end{split}
\end{equation*}
If $\alpha=1$ and $\beta=\pi/2$, then $K=\pi/2$ and we still have \eqref{equ-cst-rcx}. Also from \eqref{equ-area-bound} we have
\begin{equation*}
\Area(S\cap A(\cos z)^c)<361.
\end{equation*}
This finishes the proof of Theorem \ref{thm-sin}.
\hfill $\square$


\end{document}